\newtheorem{theorem}{Theorem}[section]
\newtheorem{corollary}[theorem]{Corollary}
\newtheorem{defn}[theorem]{Definition}
\newtheorem{lemma}[theorem]{Lemma}
\newtheorem{proposition}[theorem]{Proposition}
\newtheorem{remark}[theorem]{Remark}
\numberwithin{equation}{section}
\newenvironment{proof}[1][Proof]{\textbf{#1.} }{\ \rule{0.5em}{0.5em}}
\begin{document}
\baselineskip=18pt

\pagenumbering{arabic}

\begin{center}
{\Large {\bf Analysis of a Reaction Diffusion Model for a Reservoir Supported Spread of Infectious Disease}}

\bigskip

W.E. Fitzgibbon and J.J. Morgan

Department of Mathematics

University of Houston\\
Houston, TX 77204, USA

\vspace{0.2in}
\end{center}

\begin{abstract}
Motivated by recent outbreaks of the Ebola Virus, we are concerned with the role that a vector reservoir plays in supporting the spatio-temporal spread of a highly lethal disease through a host population. In our context, the reservoir is a species capable of harboring and sustaining the pathogen. We develop models that describe the horizontal spread of the disease among the host population when the host population is in contact with the reservoir and when it is not in contact with the host population.  These models are of reaction diffusion type, and they are analyzed, and their long term asymptotic behavior is determined.  
\end{abstract}

\noindent 2000 Mathematics Subject Classification: 34, 35K65, 92B99
\medskip

\noindent Keywords: vector-host, reaction-diffusion, reservoir model, asymptotic behavior.

\section{Introduction}
In what follows we introduce a suite of models of increasing complexity to describe the role of a reservoir in circulating a highly lethal disease among a host population. In our context the reservoir will be a species in which the infectious agent normally lives and multiplies. We assume the reservoir harbors the infectious agent without injury to itself and is capable of transmitting the agent to the host species.  We model the outbreak and spatio-temporal spread of a highly lethal disease that is initiated and sustained via contact of a host population with an infected reservoir population.  We will be able to account for the random dispersion of the host population across a region $\Omega$ and the random dispersion of the reservoir species across a proper subregion of $\Omega$.  We will assume that the infectious agent is transferred to the host by virtue of contact with the infected reservoir and that the agent can be transferred horizontally within the host population via contact of infected and uninfected hosts.  Circulation of the infectious agent within the reservoir population occurs horizontally by contact of the uninfected and the infected. We assume no vertical transmission within either population. Our models do not feature standard crisscross dynamics \cite{bib4}.  They do not allow transfer of the infectious agents from the infected host to the uninfected reservoir.   We are motivated by outbreaks of the Ebola Virus Disease (also known as Ebola Hemorrhagic Fever) which is commonly called Ebola. 

The first recorded outbreak of Ebola occurred 1976 and between this and the 2013-2015 West African outbreak, 24 outbreaks have occurred, \cite{bib1}. Ebola affects humans, primates, fruit bats and other mammals. The natural reservoir of the Ebola virus is yet to be determined.  However, among a range of accidental animal hosts fruit bats are believed to be the natural Ebola virus reservoir, \cite{bib2}. The virus exhibits a very high pathogenicity among humans, non–human primates and other mammals. However, it does not appear to have much if any deleterious effect on fruit bats. Ebola symptoms include: fever, sore throat, muscular pain, headaches, vomiting, diarrhea, rash, decreased function of the liver and kidneys, and bleeding (internal and external). The infectious phase starts between two days and three weeks after exposure. The disease has a high mortality, killing between 25 and 90 percent of those infected. Within humans, the virus is spread by direct contact with bodily fluids (blood, urine, saliva, sweat, feces, vomit, breast milk and semen). The bodies of individuals who have died from the disease, as well as those suffering from the disease, can transmit. The virus can also be transmitted by objects of clothing that have been contaminated with the bodily fluids of either individuals with the disease, or the bodies of individuals who have succumbed to the disease, \cite{bib3}.  There is additionally a possibility that the virus persists in the semen of men who have recovered from the disease, and that the disease can be propagated via sexual congress. However, we shall not include this mechanism in our modeling considerations.

We feel it incumbent to emphasize that our models are prototypes far too simplistic to provide an accurate depiction of the spread of Ebola and can only serve as a gateway to the study of more comprehensive and realistic models.  We shall use the terms host and reservoir populations and not refer to any specific host or reservoir species.  Our concluding remarks will return the Ebola virus as a topic of discussion.

\section{Spatially Homogeneous Outbreak Model In A Region With No Reservoir}
The first model describes an outbreak of the virus in a host population that is removed from the reservoir. Here we could consider the arrival of infected travelers in a virus free region with no indigenous reservoir from a region inhabited by an infected reservoir. Our model is a simple system of ordinary differential equations that does not include any spatial considerations. 

We use a variant of the well known SEIR model to describe the propagation of the disease. The state variables are $S$, $E$, $I$ and $R$ together with the variable $C$. The variables $S$, $E$, $I$ and $R$ represent the standard SEIR compartments; the Susceptible Class, $S$, consists of individuals who are capable of contracting the disease; the Exposed Class, $E$, consists of individuals who have contracted the diseases but who have not yet become fully infected and are not capable of transmitting it; the Infected Class, $I$, consists of individuals who are fully infected by the virus and are capable of transmitting it; and the Removed Class, $R$, consists of individuals who have perished, those who have acquired immunity by recovery, or those who have become immune by exposure without fully developing the disease. The Removed Class does not affect the dynamics of the systems and there is no need to consider it analytically.  We include it in our discussion because in a natural way it tracks the impact of the disease. The Contaminated Class, $C$, represents contaminated bodies of individuals who have died from the disease. The progression of Ebola can be described by the following system of ordinary differential equations and initial conditions
\begin{subequations}
\begin{align}
\frac{dS}{dt}&=-\sigma IS-\omega SC\label{rhoa}\\
\frac{dE}{dt}&=\sigma IS+\omega SC-\lambda E\label{rhob}\\
\frac{dI}{dt}&=\lambda_1E-\gamma I\label{rhoc}\\
\frac{dC}{dt}&=\gamma_1I-\mu C\label{rhod}\\
\frac{dR}{dt}&=\lambda_2E+\gamma_2I+\mu C\label{rhoe}\\
S(0)&=S_0,E(0)=E_0,I(0)=I_0,C(0)=C_0,R(0)=R_0\label{rhof}
\end{align}
\end{subequations}
We assume the initial populations $S_0,E_0,I_0>0$ and $C_0,R_0\geq 0$, and the parameters $\alpha,\beta,\lambda_1,\lambda_2,\gamma_1,\gamma_2,\sigma,\mu,\omega>0$ with $\lambda=\lambda_1+\lambda_2$  and $\gamma=\gamma_1+\gamma_2$. The total population is given by $P=S+E+I$. Susceptible individuals can contract the disease from contact with either infected individuals or with the contaminated. The transmission rates are given by $\sigma IS$  and $\omega SC$ respectively entering the exposed class. The virus incubates during the exposed stage and is noninfectious and essentially asymptomatic. Individuals leave the  Exposed Class with a constant rate $\lambda>0$, either becoming fully infected and infectious at rate $\lambda_1>0$, or they are able to subdue the virus and leave the exposed class for the removed class, remaining non-infectious and acquiring immunity at rate  $\lambda_2>0$.   Infectious individuals leave the infectious class at a constant rate $\gamma>0$, either perishing and entering the contaminated class or recovering. Those perishing enter the contaminated class at the rate $\gamma_1>0$, and those recovering gain permanent immunity entering the removed class at rate $\gamma_2>0$.  Contact between susceptibles and the contaminated will expose them to the virus and further deplete the susceptible class.  Contamination decays at a constant rate $\mu>0$.   We formally state the following result and include its proof only in an effort to set the stage for what follows.

\begin{theorem}\label{theorem1}
Assume $\alpha,\beta,\lambda_1,\lambda_2,\gamma_1,\gamma_2,\sigma,\mu,\omega>0$  with $\lambda=\lambda_1+\lambda_2$  and $\gamma=\gamma_1+\gamma_2$. If $S_0,E_0,I_0>0$ and $C_0,R_0\geq 0$, there exists a unique, uniformly bound, nonnegative global solution $S(t),E(t),I(t),C(t),R(t)$ to (\ref{rhoa}-\ref{rhof}) with $$\max_{t\ge 0}\{S(t),E(t),I(t),C(t)\}\leq S_0+E_0+I_0+C_0$$In addition, $$\lim_{t \to \infty}E(t)=\lim_{t\to\infty}I(t)=\lim_{t\to\infty}C(t)=0$$and$$\lim_{t\to\infty}S(t)=S_*>0$$
\end{theorem}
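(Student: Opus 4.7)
My plan is to proceed in three stages: local well-posedness and nonnegativity; a summing identity that yields both the uniform bound and the $L^1$-integrability of $E,I,C$; and the asymptotic behavior, with the strict positivity of $S_*$ as the delicate point.

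For local existence, the right-hand sides of (\ref{rhoa})--(\ref{rhof}) are polynomial in the state variables, hence locally Lipschitz, so the Picard--Lindel\"of theorem gives a unique solution on some maximal interval $[0,T_{\max})$. Nonnegativity is then established coordinatewise. The equation $\frac{dS}{dt}=-S(\sigma I+\omega C)$ integrates to
$$S(t)=S_0\exp\Bigl(-\int_0^t\bigl(\sigma I(s)+\omega C(s)\bigr)\,ds\Bigr),$$
which is strictly positive as long as the solution exists. The remaining equations are quasi-positive---each right-hand side is nonnegative whenever the corresponding component vanishes and the others are nonnegative---so a standard invariance argument preserves $E,I,C,R\ge 0$ on $[0,T_{\max})$.

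The key observation is that the bilinear transmission terms cancel in the sum of the first four equations,
$$\frac{d}{dt}(S+E+I+C)=-\lambda_2 E-\gamma_2 I-\mu C\le 0.$$
This monotonicity immediately yields the stated uniform bound $\max_{t\ge 0}\{S,E,I,C\}\le S_0+E_0+I_0+C_0$ and, combined with nonnegativity, forces $T_{\max}=\infty$. Integrating the same identity over $[0,\infty)$ further shows that $\lambda_2 E+\gamma_2 I+\mu C\in L^1(0,\infty)$, hence each of $E$, $I$, $C$ lies in $L^1(0,\infty)$.

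For the asymptotics, $S$ is monotone nonincreasing and bounded below by $0$, so the limit $S(t)\to S_*\ge 0$ exists. The uniform state bound makes the right-hand sides of (\ref{rhob})--(\ref{rhod}) bounded, hence $E,I,C$ are uniformly continuous on $[0,\infty)$; together with the $L^1$-integrability just established, Barbalat's lemma yields $E(t),I(t),C(t)\to 0$. The main obstacle is the strict positivity $S_*>0$, since the mere convergence $I,C\to 0$ is not enough to rule out $S_*=0$. Here the $L^1$ bound pays off: passing to the limit $t\to\infty$ in the explicit formula for $S$ obtained above gives
$$S_*=S_0\exp\Bigl(-\int_0^\infty\bigl(\sigma I(s)+\omega C(s)\bigr)\,ds\Bigr)>0,$$
and the proof is complete.
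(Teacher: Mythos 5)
Your proposal is correct and follows essentially the same route as the paper: the cancellation of the bilinear terms in the sum $S+E+I+C$ giving both the uniform bound and the $L^1$ control of $E,I,C$, the bounded-derivative/Barbalat step for the decay, and the explicit exponential formula $S(t)=S_0\exp\bigl(-\int_0^t(\sigma I+\omega C)\,ds\bigr)$ with the finite integral forcing $S_*>0$. The only (harmless) difference is that you sum the first four equations directly, whereas the paper first sums all five and then separately introduces $W=S+E+I+C$ to obtain the same identity.
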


\begin{proof}
The analysis of this system relies on elementary methods of ordinary differential equations.  A variety of different fixed point theorems guarantee the existence of unique solutions on a maximal interval $[0,T_\text{max})$.  Showing that the solution components are uniformly bounded on any bounded interval $[0,T_\text{max})$ will insure that $T_\text{max}=\infty$.  We observe that the vector field $$F(S,E,I,C)=\begin{bmatrix}-\sigma IS-\omega SC\\\sigma IS+\omega SC-\lambda E\\\lambda_1E-\gamma I\\\gamma_1I-\mu C\\\lambda_2E+\gamma_2I+\mu C\end{bmatrix}$$does not point out of the positive orthant of $\mathcal{R}^5$. Consequently, since all the initial data is componentwise nonnegative, the solutions to (\ref{rhoa}-\ref{rhof}) are nonnegative for $t\in[0,T_\text{max})$. If we define $$U(t)=S(T)+E(t)+I(t)+C(t)+R(t)$$ then we can sum the equations in (\ref{rhoa}-\ref{rhoe}) to observe $\frac{dU}{dt}\leq 0$, and consequently, $U(t)\leq U(0)$. As a result, there exists an $M>0$ so that $$S(t),E(t),I(t),C(t),R(t)\leq S_0+E_0+I_0+C_0+R_0 \text{ for all }t\in[0,T_\text{max})$$Hence, $T_\text{max}=\infty$, and we are assured unique, uniformly bound, nonnegative global solutions. Moreover, there exists $N>0$ so that
$$\max \left\{\left|\frac{dS}{dt}\right|,\left|\frac{dE}{dt}\right|,\left|\frac{dI}{dt}\right|,\left|\frac{dS}{dC}\right|\right\} < N$$
We set $W=S+E+I+C$ and observe that
$$\frac{dW}{dt}+\lambda_2 E+\gamma_2 I+\mu C=0$$
and hence $\int_0^\infty\left(E(s)+I(s)+C(s)\right)ds < \infty$. Coupling this with the nonnegativity of $E$, $I$ and $C$, and the boundedness of their derivatives, implies 
$$\lim_{t \to \infty}E(t)=\lim_{t\to\infty}I(t)=\lim_{t\to\infty}C(t)=0$$
Now observe that if $g(s)=\sigma I(S)+\omega C(S)$, then $\int_0^\infty g(s)ds = K < \infty$. Also, $\frac{dS}{dt}=-g(t)S$. Consequently,
$$S(t)=S_0e^{-\int_0^tg(s)ds}\ge S_0 e^{-K}>0$$
Therefore, since $S(T)$ is nonincreasing, and bounded below by $S_0e^{-K}>0$, we area assumed that 
$$\lim_{t\to\infty}S(t)=S_*>0$$
\end{proof}

\section{Reservoir Supported Spread}
In this section we consider the case of an infected reservoir introducing and supporting the spread of a virus through a host population. In this case, susceptible hosts can contract the disease via contact with an infected reservoir, as well as horizontally from infected and contaminated hosts. Interspecies transmission occurs only from infected reservoir to susceptible hosts.  We do not assume that infected or contaminated hosts transmit the virus back to the reservoir. 

Our model accounts for disease dynamics and demographics within the reservoir. We assume that the reservoir demographics are modeled by the standard logistic equation. We let $\theta(t)$ denote the time dependent reservoir population having birth rate $\beta$ and logistic mortality coefficient $m$.  Both $\beta$ and $m$ are positive constants. The reservoir population $\theta(t)$ satisfies 
\begin{subequations}
\begin{align}
\frac{d\theta}{dt}&=\beta\theta-m\theta^2\label{thetaa}\\
\theta(0)&=\theta_0>0\label{thetab}
\end{align}
\end{subequations}
The presence of the virus subdivides the reservoir population two compartments, the uninfected, $\phi(t)$, and the infected $\psi(t)$, where $\theta(t)=\phi(t)+\psi(t)$.  It is of course known that $\theta(t)>0$ for $t>0$, and moreover we have $\lim_{t\to\infty} \theta(t)=\frac{\beta}{m}$. If $\sigma_1>0$, we introduce a force of infection term $f(\sigma,\psi)=\sigma_1 \phi \psi$ to model the transmission of the virus from the infected to the uninfected in the reservoir.  We have:
\begin{subequations}
\begin{align}
\frac{d\phi}{dt}&=\beta \theta-\sigma_1\phi\psi\label{phia}\\
\frac{d\psi}{dt}&=\sigma_1\phi\psi-m\psi\theta\label{phib}\\
\phi(0)&=\phi_0>0;\psi(0)=\psi_0>0\label{phic}
\end{align}
\end{subequations}
This system assumes that there is no vertical transmission to reservoir offspring, and that the virus has no deleterious effect on the reservoir. An invariant rectangles argument shows that the solution components remain non-negative.  In addition, since $0\le\theta(t)\le\max\left\{\frac{\beta}{m},\theta_0\right\}$ for all $t \ge 0$, we can use the fact that $\theta(t)=\phi(t)+\psi(t)$ to conclude that  $0\le\phi(t),\psi(t)\le\max\left\{\frac{\beta}{m},\theta_0\right\}$ for all $t \ge 0$.

We will again use the state variables $\left\{S(t),E(t),I(t),C(t),R(t)\right\}$ for the host population, and the differential equations describing the circulation of the disease within the host population are basically the same, differing only by the inclusion of interspecies transmission from the vector reservoir. We introduce a term of the form $\kappa S\psi$ ($\kappa >0$) to account for transmission from the infected reservoir. This becomes a loss term for the susceptible hosts, and gain term for the exposed hosts.

Again a variant of the SEIR model is used to describe the propagation of the disease within the host population. We have variables $S$, $E$, $I$ and $R$ together with the variable $C$. $S$, $E$, $I$ and $R$ represent the standard SEIR components; the susceptible class, the exposed class, the infected class (individuals who are fully infected by the virus and capable of transmitting) and the removed class. The Contaminated Class is given by $C$. Susceptible hosts can contract the disease by contact with the vector reservoir, or they can contract it by contact with infected or contaminated individuals.

The force of infection term, which accounts for transmission of the virus to the Susceptible Class from the infected reservoir, as well as transmission from the infective and contaminated hosts, has the form $f_2(S,I,C,\psi)=\sigma_2SI+\kappa S\psi+\omega SC$ where $\sigma_2$, $\kappa$ and $\omega$ are positive constants. The equations that describe the circulation of the pathogen among the host population now become
\begin{subequations}
\begin{align}
\frac{dS}{dt}&=-\sigma_2 IS-\kappa S\psi-\omega SC\label{Sa}\\
\frac{dE}{dt}&=\sigma_2 IS+\kappa S\psi+\omega SC-\lambda E\label{Sb}\\
\frac{dI}{dt}&=\lambda_1E-\gamma I\label{Sc}\\
\frac{dC}{dt}&=\gamma_1I-\mu C\label{Sd}\\
\frac{dR}{dt}&=\lambda_2E+\gamma_2I\label{Se}\\
S(0)&=S_0,E(0)=E_0,I(0)=I_0,C(0)=C_0,R(0)=R_0\label{Sf}
\end{align}
\end{subequations}
Neither the transmission of the virus nor the demographics in the reservoir are affected by the dynamics of the virus in the host.  We are able to analyze the dynamical behavior of the reservoir (\ref{Sa}-\ref{Sf}) independently.

\begin{proposition}\label{prop2-1} If $\left\{\sigma_2,\beta,m,\phi_0,\psi_0\right\}>0$, then there exists a globally bounded non-negative solution pair $\left(\phi(t),\psi(t)\right)$ to (\ref{Sa}-\ref{Sc}), with 
$$\max_{t\ge0}\left\{\phi(t),\psi(t)\right\}\le\max\left\{\frac{\beta}{m},\theta_0=\phi_0+\psi_0\right\}$$
Moreover, if $\sigma_1\le m$ then 
$$\lim_{t \to \infty}\left(\phi(t),\psi(t)\right)=\left(\frac{\beta}{m},0\right)$$
If $\sigma_1> m$ then 
$$\lim_{t \to \infty}\left(\phi(t),\psi(t)\right)=\left(\frac{\beta}{\sigma_1},\frac{\beta}{\sigma_1}\left(\frac{\sigma_1}{m}-1\right)\right)$$
\end{proposition}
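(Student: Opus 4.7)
The plan has three steps: establish global existence, nonnegativity, and the stated uniform bound; reduce the two-component system to a single scalar equation for $\psi$; and handle the two parameter regimes via comparison with autonomous logistic equations.

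For the first step, local existence is immediate from Picard--Lindel\"of, and invariance of the closed first quadrant follows from a boundary-flow check: on $\{\phi=0\}$ we have $\phi'=\beta\theta\ge 0$, while $\{\psi=0\}$ is itself invariant under (\ref{phib}). Since $\theta=\phi+\psi$ satisfies the logistic equation (\ref{thetaa}-\ref{thetab}), the bound $\theta(t)\le\max\{\beta/m,\theta_0\}$ and the convergence $\theta(t)\to\beta/m$ are already in hand. Combined with nonnegativity these bound $\phi$ and $\psi$ individually by the same constant, ruling out finite-time blowup and giving the stated global solution.

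The decoupling step substitutes $\phi=\theta-\psi$ into (\ref{phib}) to produce the scalar non-autonomous equation
\begin{equation*}
\psi'=\psi\bigl[(\sigma_1-m)\,\theta(t)-\sigma_1\psi\bigr],
\end{equation*}
whose coefficients converge since $\theta(t)\to\beta/m$. When $\sigma_1\le m$, the bracket is bounded above by $-\sigma_1\psi$, so $\psi'\le -\sigma_1\psi^2$ forces $\psi(t)\to 0$ directly, and hence $\phi(t)=\theta(t)-\psi(t)\to\beta/m$. When $\sigma_1>m$, I plan to sandwich $\psi$ between solutions of two autonomous logistic equations: given $\varepsilon>0$, choose $T$ so that $|\theta(t)-\beta/m|<\varepsilon$ for $t\ge T$, and let $y_\pm$ solve $y'=y[(\sigma_1-m)(\beta/m\pm\varepsilon)-\sigma_1 y]$ with $y_\pm(T)=\psi(T)$. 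Standard comparison for scalar ODEs gives $y_-(t)\le\psi(t)\le y_+(t)$, and each $y_\pm$ converges to a positive equilibrium that differs from $\psi^*:=(\beta/\sigma_1)(\sigma_1/m-1)$ by $O(\varepsilon)$. Letting $\varepsilon\to 0$ yields $\psi(t)\to\psi^*$ and $\phi(t)\to\beta/m-\psi^*=\beta/\sigma_1$.

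The main obstacle is the persistent case $\sigma_1>m$, where no single autonomous limit equation governs the asymptotics and one must also verify that the lower comparison is not the trivial solution. The sandwich argument above is what I expect to overcome this: each bracketing logistic equation is globally asymptotically stable from positive initial data, and the multiplicative structure $\psi'=\psi\,g(t,\psi)$ combined with $\psi_0>0$ ensures $\psi(T)>0$ for every $T$, so the strictly positive lower bound propagates through the comparison.
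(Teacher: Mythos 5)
Your proof is correct and follows essentially the same route as the paper: reduce to the scalar non-autonomous logistic equation $\psi'=\psi[(\sigma_1-m)\theta(t)-\sigma_1\psi]$ via the substitution $\phi=\theta-\psi$, and exploit the known convergence $\theta(t)\to\beta/m$. Your explicit comparison sandwich in the case $\sigma_1>m$ supplies details the paper only asserts, and your direct sign bound $\psi'\le-\sigma_1\psi^2$ in the case $\sigma_1\le m$ is if anything cleaner than the paper's $\epsilon(t)$-perturbation argument.
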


\begin{proof} Following the reasoning of Theorem \ref{theorem1}, the structure of the reaction vector field and the non-negativity of $\theta$ guarantees that $\phi(t),\psi(t) \ge 0$. Furthermore, since $\theta(t)$ is uniformly bounded and $\theta(t)=\phi(t)+\psi(t)$, we are assured a globally bounded non-negative solution $\left(\theta(t),\phi(t),\psi(t)\right)$ to (\ref{phia}-\ref{phic}), and from our previous comments, $0\le\phi(t),\psi(t)\le\max\left\{\frac{\beta}{m},\theta_0\right\}$ for all $t\ge0$. In addition, from our earlier discussion, we have $\lim_{t\to\infty}\theta(t)=\frac{\beta}{m}$. Consequently, we can determine the asymptotic behavior of $\psi(t)$ by substituting $\phi(t)=\theta(t)-\psi(t)$ and replacing $\theta(t)$ with $\theta(t)-\frac{\beta}{m}+\frac{\beta}{m}$ in (\ref{phib}). Rewriting the equation gives
\begin{equation}
\frac{d\psi}{dt}=\left(\sigma_1-m\psi\right)\left(\theta(t)-\frac{\beta}{m}\right)+\beta\left(\frac{\sigma_1}{m}-1\right)\psi-\sigma_1\psi^2\label{psi-eq}
\end{equation}
If we define $\epsilon(t)=\left(\sigma_1-m\psi\right)\left(\theta(t)-\frac{\beta}{m}\right)$, then $\lim_{t\to\infty}\epsilon(t)=0$. 
If $\sigma_1\le m$ then $\frac{d\psi}{dt}\le \epsilon(t)-\sigma_1\psi^2$. As a result, the nonnegativity of $\psi(t)$ and $\psi(0)>0$ imply that $\lim_{t\to\infty}\psi(t)=0$. Consequently, $\lim_{t\to\infty}\phi(t)=\lim_{t\to\infty}\theta(t)=\frac{\beta}{m}$. If $\sigma_1>m$, then the nonnegativity of $\psi(t)$, (\ref{psi-eq}) and $\psi(0)>0$ imply $\lim_{t\to\infty}\psi(t)=\frac{\beta}{\sigma_1}\left(\frac{\sigma_1}{m}-1\right)$. Consequently, since $\lim_{t\to\infty}\theta(t)=\frac{\beta}{m}$ and $\theta(t)=\phi(t)+\psi(t)$, we obtain $\lim_{t\to\infty}\phi(t)=\frac{\beta}{\sigma_1}$.
\end{proof}

\begin{theorem}\label{theorem2} Assume $\alpha,\beta,\lambda_1,\lambda_2,\gamma_1,\gamma_2,\sigma,\mu,\omega,\kappa >0$ with $\lambda=\lambda_1+\lambda_2$ and $\gamma=\gamma_1+\gamma_2$. If $S_0,E_0,I_0,\phi_0,\psi_0>0$ and $C_0,R_0\ge0$, then there exists a globally bounded non-negative solution $\left\{S(t),E(t),I(t),C(t),\phi(t),\psi(t)\right\}$ to (\ref{phia}-\ref{phic},\ref{Sa}-\ref{Sf}), with 
$$\max_{t\ge 0}\left\{S(t),E(t),I(t),C(t)\right\}\le\left\{S_0+E_0+I_0+C_0\right\}$$
and
$$\max_{t\ge 0}\left\{\phi(t),\psi(t)\right\}\le\max\left\{\phi_0+\psi_0,\frac{\beta}{m}\right\}$$
Moreover, $$\lim_{t \to \infty}E(t)=\lim_{t\to\infty}I(t)=\lim_{t\to\infty}C(t)=0$$
If $\sigma_1\le m$ then $\lim_{t\to\infty}S(t)=S_*>0$ and
$$\lim_{t \to \infty}\left(\phi(t),\psi(t)\right)=\left(\frac{\beta}{m},0\right)$$
If $\sigma_1> m$ then $\lim_{t\to\infty}S(t)=0$ and 
$$\lim_{t \to \infty}\left(\phi(t),\psi(t)\right)=\left(\frac{\beta}{\sigma_1},\frac{\beta}{\sigma_1}\left(\frac{\sigma_1}{m}-1\right)\right)$$
\end{theorem}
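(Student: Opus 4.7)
The plan is to mirror the structure of the proof of Theorem \ref{theorem1}, using Proposition \ref{prop2-1} to handle the reservoir components. A standard fixed-point argument gives local existence and uniqueness on some maximal interval $[0,T_{\max})$, and the host reaction field is quasi-positive (each equation in (\ref{Sa})--(\ref{Sd}) has nonnegative right-hand side whenever the corresponding state variable vanishes), so the nonnegative orthant is forward invariant. Setting $W(t):=S(t)+E(t)+I(t)+C(t)$ and summing (\ref{Sa})--(\ref{Sd}), the transmission terms $\sigma_2 IS$, $\kappa S\psi$, $\omega SC$ cancel between the $S$ and $E$ equations, leaving $\frac{dW}{dt}=-\lambda_2 E-\gamma_2 I-\mu C\le 0$. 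This delivers the stated uniform bound on $(S,E,I,C)$, while Proposition \ref{prop2-1} supplies the bound on $(\phi,\psi)$; global existence $T_{\max}=\infty$ follows immediately.

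Integrating the identity $\frac{dW}{dt}+\lambda_2 E+\gamma_2 I+\mu C=0$ on $[0,\infty)$ yields $E,I,C\in L^1(0,\infty)$. Since every right-hand side is bounded, the derivatives $\frac{dE}{dt},\frac{dI}{dt},\frac{dC}{dt}$ are uniformly bounded, so the standard argument ``nonnegative integrable function with bounded derivative tends to zero,'' exactly as in Theorem \ref{theorem1}, gives $E(t),I(t),C(t)\to 0$. The reservoir asymptotics for $\phi$ and $\psi$ are inherited directly from Proposition \ref{prop2-1}, since (\ref{phia})--(\ref{phic}) does not couple to the host variables. For $S$, write $g(t):=\sigma_2 I(t)+\kappa\psi(t)+\omega C(t)\ge 0$, so that $\frac{dS}{dt}=-g(t)S$ and hence $S(t)=S_0\exp\bigl(-\int_0^t g(s)\,ds\bigr)$, which is monotone nonincreasing and therefore has a limit $S_*\ge 0$. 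When $\sigma_1>m$, Proposition \ref{prop2-1} gives $\psi(t)\to(\beta/\sigma_1)(\sigma_1/m-1)>0$, so $\int_0^\infty g=\infty$ and $S(t)\to 0$. When $\sigma_1\le m$, $\psi(t)\to 0$ and we already have $I,C\in L^1(0,\infty)$, so $S_*>0$ will follow once we establish $\psi\in L^1(0,\infty)$, which yields $S_*=S_0\exp\bigl(-\int_0^\infty g\bigr)>0$.

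I expect the integrability $\psi\in L^1(0,\infty)$ in the subcritical regime to be the main technical hurdle. When $\sigma_1<m$ strictly, the rewriting (\ref{psi-eq}) has a genuinely negative linear coefficient $\beta(\sigma_1/m-1)<0$, and since $\theta$ converges to $\beta/m$ at the exponential rate provided by the explicit logistic solution, the perturbation $\epsilon(t)$ is exponentially small; a comparison argument then produces eventual exponential decay of $\psi$ and hence integrability. The borderline case $\sigma_1=m$ is the delicate one, because (\ref{psi-eq}) collapses to a perturbed Riccati equation whose unperturbed solutions decay only at rate $1/t$; here a sharper quantitative estimate of $\psi$ exploiting the exponential smallness of $\epsilon(t)$ would be required to secure $\psi\in L^1$ and thus $S_*>0$.
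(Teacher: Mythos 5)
Your architecture coincides with the paper's: quasi-positivity for nonnegativity, the telescoped sum $W=S+E+I+C$ for the uniform bound and for $E,I,C\in L^1(0,\infty)$, bounded derivatives forcing $E,I,C\to0$, Proposition \ref{prop2-1} for the decoupled reservoir, and the formula $S(t)=S_0\exp\bigl(-\int_0^t g(s)\,ds\bigr)$ with $g=\sigma_2 I+\kappa\psi+\omega C$ for the susceptibles. The one place you go beyond the paper is exactly the right place to push: the paper handles $\sigma_1\le m$ by asserting that since $\psi\to0$ the asymptotics of $S$ are ``determined by'' the reservoir-free system (\ref{rhoa}--\ref{rhoe}), which is not an argument. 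As you correctly observe, $S_*>0$ is equivalent to $g\in L^1(0,\infty)$, hence to $\psi\in L^1(0,\infty)$, and mere convergence $\psi(t)\to0$ does not give integrability.

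However, your hope that a sharper estimate will secure $\psi\in L^1$ at the borderline $\sigma_1=m$ cannot be realized, because the claim fails there. Substituting $\phi=\theta-\psi$ directly into (\ref{phib}) gives $\psi'=\sigma_1(\theta-\psi)\psi-m\theta\psi=(\sigma_1-m)\theta\psi-\sigma_1\psi^2$; the perturbation you call $\epsilon(t)$ carries the factor $(\sigma_1-m)$ (the printed form of $\epsilon$ in (\ref{psi-eq}) contains a typo), so at $\sigma_1=m$ it vanishes identically rather than exponentially, and the equation reduces exactly to $\psi'=-\sigma_1\psi^2$. Thus $\psi(t)=\psi_0/(1+\sigma_1\psi_0 t)$, which is not integrable, and $S(t)\le S_0\exp\bigl(-\kappa\int_0^t\psi\bigr)=S_0(1+\sigma_1\psi_0 t)^{-\kappa/\sigma_1}\to0$. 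So the dichotomy in the statement should be $\sigma_1<m$ versus $\sigma_1\ge m$ (with $S_*=0$ whenever $\sigma_1\ge m$), and your proof should say so rather than leave the borderline open. For $\sigma_1<m$ strictly your sketch does close the gap, and more simply than you suggest: since the logistic solution satisfies $\theta(t)\ge\min\{\theta_0,\beta/m\}>0$ for all $t$, one gets $\psi'\le-(m-\sigma_1)\theta_{\min}\psi$ directly, hence exponential decay and $\psi\in L^1(0,\infty)$ without any appeal to the rate of convergence of $\theta$ to $\beta/m$.
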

\begin{proof}The arguments establishing global existence, non-negativity, and uniform bounds, as well as the convergence of $E(t)$, $I(t)$ and $C(t)$, are essentially the same as those given in Theorem \ref{theorem1} and will not be repeated. The dynamics of $\left\{\phi(t),\psi(t)\right\}$ is independent of the host population and its asymptotic behavior is given by Proposition \ref{prop2-1}. All that remains is the asymptotic behavior of $S(t)$.  However, when $\sigma_1\le m$, we know that $\lim_{t\to\infty}\psi(t)=0$, and as a result, the asymptotic behavior is determined by (\ref{rhoa}-\ref{rhoe}), implying $\lim_{t\to\infty}S(t)=S_*>0$. In the case of $\sigma_1>m$, we know $\lim_{t\to\infty}\psi(t)=\frac{\beta}{\sigma_1}\left(\frac{\sigma_1}{m}-1\right)>0$, and consequently, (\ref{Sa}) implies $S^\prime(t)\le-k_1S(t)$ for some $k_1>0$. Therefore, $\lim_{t\to\infty}S(t)=0$.
\end{proof}

\section{Spatial Outbreak Model}
The next model is concerned with a dispersing spatially distributed population.  We assume that our population is confined to a bounded region $\Omega$ in $R^2$ with smooth boundary $\partial\Omega$, such that $\Omega$ lies locally on one side of $\partial\Omega$.  The state variables $\left\{s,e,i,c\right\}$ represent time dependent spatial densities of the susceptible, exposed, infective, and contaminated classes.  The time dependent populations of these classes are obtained by integration over $\Omega$. That is
$$S(t)=\int_\Omega s(x,t)dx, E(t)=\int_\Omega e(x,t)dx, I(t)=\int_\Omega i(x,t)dx, C(t)=\int_\Omega c(x,t)dx$$
Population dispersion is modeled by Fickian diffusion with spatially dependent diffusivities $d_2$ and $d_3$ for the susceptible and exposed classes respectively. We further assume that $d_2,d_3>0$ and are smooth on $\overline\Omega$. We remark that for the sake of mathematical completeness we allow distinct diffusivities for suspectible and exposed densities. Since we assume that exposed individuals exhibit no adverse effects from the disease, this is unnecessary for our model. However, since infective/infectious are severely ill, we assume spatial distribution but no diffusion. Similarly, contaminated individuals are spatially dispersed with no diffusion. The boundary conditions, $\frac{\partial s}{\partial \eta}=\frac{\partial e}{\partial \eta}=0$ on $\partial\Omega$ for $t>0$, insure that the populations remain confined to $\Omega$ for all time. The infection kinetics are essentially the same as those given for the initial ordinary differential equation, with the only difference being that we assume the contact rates $\sigma(x)s(x,t)i(x,t)$ and $\omega(x) s(x,t)c(x,t)$ are spatially dependent. These considerations give rise to the following coupled system of partial differential equations and evolving ordinary differential equations:
\begin{subequations}
\begin{align}
\frac{\partial s}{\partial t}&=\bigtriangledown\cdot d_2(x)\bigtriangledown s-\sigma(x) si-\omega(x)sc &x \in\Omega,t>0 \label{odepde1a}\\
\frac{\partial e}{\partial t}&=\bigtriangledown\cdot d_3(x)\bigtriangledown e+\sigma(x) si+\omega(x) sc-\lambda e &x \in\Omega,t>0\label{odepde1b}\\
\frac{\partial i}{\partial t}&=\lambda_1e-\gamma i &x \in\Omega,t>0\label{odepde1c}\\
\frac{\partial c}{\partial t}&=\gamma_1i-\mu c &x \in\Omega,t>0\label{odepde1d}\\
\frac{\partial s}{\partial \eta}&=\frac{\partial e}{\partial \eta}=0 &x \in\partial\Omega,t>0\label{odepde1e}\\
s(x,0)&=s_0(x),e(x,0)=e_0(x),i(x,0)=i_0(x),c(x,0)=c_0(x) &x \in\Omega\label{odepde1f}
\end{align}
\end{subequations}
Recall that $\lambda e$ is the rate of transfer from the exposed class, $\lambda=\lambda_1+\lambda_2$, and the rate of transfer from the exposed class to the recovered class by warding off the infection is given by $\lambda_2 e$. The infection induced mortality is $\gamma i$, $\gamma=\gamma_1+\gamma_2$, and $\gamma_2 i$ represents recovery. The time dependent removed population $R(t)$, which includes those who have become immune through exposure, and those who have become immune through recovery as well as those who have perished satisfies,
$$\frac{dR}{dt}=\lambda_2\int_\Omega e(x,t)dx+\gamma_2\int_\Omega i(x,t)dx+\mu\int_\Omega c(x,t)dx$$
$R(t)$ gives a measure of the impact of the infection, but does not feed back into the dynamics of the system, and therefore will not be given further discussion. These considerations produce the system (\ref{odepde1a}-\ref{odepde1f}) of coupled semi linear diffusion equations with parametrically evolving ordinary differential equations. A system such as (\ref{odepde1a}-\ref{odepde1f}) is often called partially dissipative \cite{bib5,bib6}. 

We shall make use of the following lemma which guarantees uniform a priori $C^{2,1}(\overline\Omega\times(0,\infty))$ bounds for solutions to the nonhomogeneous linear diffusion equations.  We extract the details of the proof from arguments for more general results appearing in \cite{bib8} which are predicated on fundamental parabolic theory of \cite{bib9}. 

\begin{lemma}\label{lemma3-2} Assume that $\Omega$ is a bounded region in $R^2$ with smooth boundary $\partial\Omega$ such that $\Omega$ lies locally on one side of $\partial\Omega$, $d\in C^2(\overline\Omega,(0,\infty))$ and $v_0\in C^2(\overline\Omega)$. If $p>1$, $T>0$ and $f\in L_p(\Omega\times(0,T))$, then there exists a unique $v\in W_p^{(2,1)}(\Omega\times(0,T))$ solving
\begin{align}
\frac{\partial v}{\partial t}&=\bigtriangledown\cdot d(x)\bigtriangledown v+f(x,t)&x\in\Omega,t>0\nonumber\\
\frac{\partial v}{\partial \eta}&=0&x\in\partial\Omega,t>0\nonumber\\
v(x,0)&=v_0(x)&x\in\Omega\nonumber
\end{align}
and there exists a constant $C_{p,T}>0$, independent of $f$ and $v_0$ so that 
\begin{equation}\label{3-3}
\|v\|_{p,\Omega\times(0,T)}^{(2,1)}\le C_{p,T}\left(\|v_0\|_\infty^{(2)}+\|f\|_{p,\Omega\times(0,T)}\right)
\end{equation}
Furthermore, if $p>\frac{n+2}{2}$ and $f\in L_p\left(\Omega\times(\tau,\tau+2)\right)$ for all $\tau>0$, then $v\in C(\overline\Omega\times[\tau+1,\tau+2])$ and there exists a constant $N_p$ independent of $f$ and $v_0$ so that 
\begin{equation}\label{3-4}
\|v\|_{\infty,\Omega\times(\tau+1,\tau+2)}\le N_{p}\left(\|f\|_{p,\Omega\times(\tau,\tau+2)}+\|v\|_{p,\Omega\times(\tau,\tau+2)}\right)
\end{equation}
Finally, if $f\in C^{\left(\theta,\frac{\theta}{2}\right)}(\overline\Omega\times[0,T])$ for some $0<\theta<1$, then $v\in C^{(2,1)}(\overline\Omega\times(0,T])$, and if $\epsilon>0$ then there exists $M_{\theta,\epsilon,T}>0$ independent of $f$ and $v_0$ so that 
\begin{equation}\label{3-5}
\|v\|_{\infty,\Omega\times(\epsilon,T)}^{(2,1)}\le M_{\theta,\epsilon,T}\left(\|v_0\|_\infty^{(2)}+\|f\|_{\infty,\Omega\times(0,T)}^{\left(\theta,\frac{\theta}{2}\right)}\right)
\end{equation}
\end{lemma}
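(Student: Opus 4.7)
The plan is to reduce all three assertions to classical parabolic regularity theory for the uniformly strongly parabolic divergence-form operator $Lv := \partial_t v - \nabla\cdot d(x)\nabla v$. Because $d \in C^2(\overline\Omega,(0,\infty))$ on the compact set $\overline\Omega$, the principal part of $L$ is uniformly elliptic with $C^2$ coefficients, $\partial\Omega$ is smooth, and the homogeneous Neumann boundary operator satisfies the usual complementing condition. With these ingredients in place, each of (\ref{3-3}), (\ref{3-4}), (\ref{3-5}) is extracted from the $L^p$ maximal regularity and Schauder theory of \cite{bib9}, as packaged in \cite{bib8}.

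Step 1 is direct application of parabolic $L^p$-solvability on $\Omega\times(0,T)$ to the initial-boundary value problem with datum $v_0 \in C^2(\overline\Omega)$ and forcing $f \in L^p$. This produces the unique $W_p^{(2,1)}$ solution together with the a priori bound (\ref{3-3}); the constant $C_{p,T}$ absorbs $T$, $p$, the ellipticity constant and $C^2$-norm of $d$, and the geometry of $\partial\Omega$.

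For (\ref{3-4}) I would localize in time by a cutoff $\chi \in C^\infty(\mathbb{R})$ with $\chi \equiv 0$ on $(-\infty,\tau+\tfrac12]$ and $\chi \equiv 1$ on $[\tau+1,\infty)$. Then $w := \chi v$ solves $Lw = \chi f + \chi' v$ with homogeneous Neumann data and vanishing initial value at $t=\tau$. Applying (\ref{3-3}) on $(\tau,\tau+2)$ with the initial-data contribution absent yields
\[
\|w\|^{(2,1)}_{p,\Omega\times(\tau,\tau+2)} \;\le\; C\bigl(\|f\|_{p,\Omega\times(\tau,\tau+2)} + \|v\|_{p,\Omega\times(\tau,\tau+2)}\bigr).
\]
Since $p > (n+2)/2$, the anisotropic parabolic Sobolev embedding $W_p^{(2,1)} \hookrightarrow C(\overline\Omega\times[\tau,\tau+2])$ is continuous, and $w \equiv v$ on $[\tau+1,\tau+2]$ delivers (\ref{3-4}).

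Finally, (\ref{3-5}) is the parabolic Schauder estimate: when $f \in C^{(\theta,\theta/2)}(\overline\Omega\times[0,T])$, the classical theory in \cite{bib9} promotes $v$ to $C^{(2+\theta,1+\theta/2)}(\overline\Omega\times[\epsilon,T])$ with a quantitative Hölder estimate in terms of $\|v_0\|^{(2)}_\infty$ and the Hölder norm of $f$; passing to sup-norms of the second spatial and first temporal derivatives yields (\ref{3-5}). The separation $\epsilon > 0$ is unavoidable, since $v_0 \in C^2$ need not satisfy higher-order compatibility conditions at $t=0$. I expect the main obstacle to be bookkeeping rather than mathematics: verifying that the cited theorems apply in divergence form $\nabla\cdot d\nabla$ with Neumann (rather than Dirichlet) data and that the constants depend only on the quantities listed in (\ref{3-3})--(\ref{3-5}); no new ideas beyond standard $L^p$ and Hölder parabolic theory are required.
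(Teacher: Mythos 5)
Your proposal matches the paper's proof essentially step for step: estimate (\ref{3-3}) is quoted from the $L^p$ parabolic theory of Ladyzhenskaya--Solonnikov--Ural'ceva, (\ref{3-4}) is obtained by exactly the same temporal cutoff trick (the paper uses $h$ with $h=0$ at the left endpoint and $h\equiv 1$ on $[\tau+1,\tau+2]$, so that $w=hv$ solves the problem with forcing $hf+vh'$ and zero initial data) followed by the anisotropic embedding $W_p^{(2,1)}\hookrightarrow L_\infty$ for $p>\frac{n+2}{2}$, and (\ref{3-5}) is deferred to classical Schauder estimates. No substantive difference in approach.
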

\begin{proof}
(\ref{3-3}) follows from \cite{bib9} pg 341. Now, suppose $p>\frac{n+2}{2}$, $\tau\ge 0$ and $f\in L_p(\Omega\times(\tau,\tau+2))$. Let $h\in C^\infty([\tau,\tau+2],[0,1])$ such that $h(0)=0$ and $h(t)=1$ for all $\tau+1\le t\le\tau+2$. Define $w(x,t)=v(x,t)h(t)$. Then $w$ solves 
\begin{align}
\frac{\partial w}{\partial t}&=\bigtriangledown\cdot d(x)\bigtriangledown w+hf+vh^\prime&x\in\Omega,\tau<t<\tau+2\nonumber\\
\frac{\partial w}{\partial \eta}&=0&x\in\partial\Omega,\tau<t<\tau+2\nonumber\\
w(x,0)&=0&x\in\Omega\nonumber
\end{align}
Then estimate (\ref{3-3}) applied to this system implies
\begin{align}
\|w\|_{p,\Omega\times(\tau,\tau+2)}^{(2,1)}&\le C_{p,2}\|hf+vh^\prime\|_{p,\Omega\times(\tau,\tau+2)}\|\nonumber\\
&\le K C_{p,2}\left( \|f\|_{p,\Omega\times(\tau,\tau+2)}+\|v\|_{p,\Omega\times(\tau,\tau+2)}\right)\nonumber
\end{align}
where $K=\|h^\prime\|_{\infty,(\tau,\tau+2)}$. Since $p>\frac{n+2}{2}$, it follows from \cite{bib9} that $W_p^{(2,1)}(\tau+2,\tau+2))$ imbeds continusly into $L_\infty(\Omega\times(\tau+2,\tau+2))$, implying (\ref{3-4}) holds. (\ref{3-5}) follows from a similar argument by employing classical estimates and Sobolev imbedding.
\end{proof}

The following requirements will guarantee the global well-posedness and uniform boundedness of solutions to (\ref{odepde1a}-\ref{odepde1f}).

\medskip
\noindent\textbf{Condition I:}
\begin{itemize}
\item $\Omega$ is a bounded region in $R^2$ with smooth boundary $\partial\Omega$ such that $\Omega$ lies locally on one side of $\partial\Omega$.
\item $d_2,d_3\in C^2(\overline\Omega)$ and $d_2,d_3\ge\delta$ for some $\delta>0$.
\item $\lambda,\lambda_1,\lambda_2>0$ with $\lambda=\lambda_1+\lambda_2$; $\gamma,\gamma_1,\gamma_2>0$ with $\gamma=\gamma_1+\gamma_2$; and $\mu>0$. 
\item $\omega,\sigma\in C^1(\overline\Omega)$ with $\omega,\sigma>\alpha$ for some $\alpha>0$.
\end{itemize}
We can insure that (\ref{odepde1a}-\ref{odepde1f}) is well posed with globally bounded classical solutions.
\begin{theorem}\label{theorem3}
Assume that all requirement of Condition I are satisfied. If $s_0,e_0,i_0,c_0\in C^2(\overline\Omega)$ are nonnegative and $s_0$ is not identically $0$, then there exists a unique, componentwise nonnegative, uniformly sup norm bounded classical solution $s,e,i,c\in C^{(2,1)}(\overline\Omega\times(0,\infty))$ to (\ref{odepde1a}-\ref{odepde1f}). Furthermore, there exists a constant $s_*>0$ such that 
$$\lim_{t\to\infty}\|s(\cdot,t)-s_*\|_{\infty,\Omega}=\lim_{t\to\infty}\|e(\cdot,t)\|_{\infty,\Omega}=\lim_{t\to\infty}\|i(\cdot,t)\|_{\infty,\Omega}=\lim_{t\to\infty}\|c(\cdot,t)\|_{\infty,\Omega}=0$$
\end{theorem}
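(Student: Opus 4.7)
The plan is to parallel the ODE argument of Theorem~\ref{theorem1}, with the bootstrap machinery of Lemma~\ref{lemma3-2} replacing the elementary estimates that sufficed there. The work splits into local existence and nonnegativity, uniform \emph{a priori} bounds (which upgrade local to global), and the asymptotic behavior. Local classical existence of $(s,e,i,c)$ on some $[0,T_{\max})$ will follow from a standard contraction argument built on (\ref{3-3}); quasi-positivity of the reaction vector field preserves nonnegativity of each component; and the maximum principle applied to (\ref{odepde1a}), whose reaction term is $-(\sigma i+\omega c)s\le 0$, gives $\|s(\cdot,t)\|_\infty\le\|s_0\|_\infty$. Integrating the sum of (\ref{odepde1a})--(\ref{odepde1d}) over $\Omega$ and using the Neumann boundary conditions produces
\[
\frac{d}{dt}\int_\Omega(s+e+i+c)\,dx=-\int_\Omega(\lambda_2 e+\gamma_2 i+\mu c)\,dx,
\]
which yields uniform $L^1$ bounds on each component and, after integrating in $t$, the key dissipation $e,i,c\in L^1\bigl((0,\infty);L^1(\Omega)\bigr)$.

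Since $i$ and $c$ are governed by scalar ODEs along each $x$, the Duhamel representation
\[
i(x,t)=e^{-\gamma t}i_0(x)+\lambda_1\int_0^t e^{-\gamma(t-\tau)}e(x,\tau)\,d\tau,
\]
and its analogue for $c$, reduce the remaining uniform bound to control of $\|e(\cdot,t)\|_\infty$. Writing the $e$-equation as $\partial_t e-\nabla\cdot d_3\nabla e=f$ with $f=\sigma si+\omega sc-\lambda e$, the boundedness of $s$ implies $|f|\le C(e+i+c)$; applying estimate (\ref{3-4}) of Lemma~\ref{lemma3-2} on successive overlapping windows $(\tau,\tau+2)$, combined with the interpolation $\|e\|_p\le\|e\|_\infty^{1-1/p}\|e\|_1^{1/p}$ and the $L^1$ bound, yields the required sup-norm bound, after which standard continuation gives $T_{\max}=\infty$.

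For the long-time behavior of $e,i,c$, the dissipation $E,I,C\in L^1(0,\infty)$ together with the uniform bounds (which make the time derivatives of $E,I,C$ bounded) imply via a Barbalat-type argument that $E(t),I(t),C(t)\to 0$. Upgrading to sup-norm decay uses (\ref{3-4}) again on $(\tau,\tau+2)$: interpolation gives $\|e\|_{L^p(\Omega\times(\tau,\tau+2))}\le\|e\|_\infty^{1-1/p}\bigl(\int_\tau^{\tau+2}E(t)\,dt\bigr)^{1/p}\to 0$, and the same interpolation controls the $L^p$ norms of $i$, $c$, and hence of $f$; this forces $\|e(\cdot,t)\|_\infty\to 0$, and dominated convergence in the Duhamel formulas then yields $\|i(\cdot,t)\|_\infty,\|c(\cdot,t)\|_\infty\to 0$.

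The most delicate part is convergence of $s(\cdot,t)$ to a strictly positive constant $s_*$. The spatial average $\bar s(t):=S(t)/|\Omega|$ is monotone nonincreasing, so $\bar s(t)\searrow s_*\ge 0$. To rule out $s_*=0$, the strong parabolic maximum principle provides $s(\cdot,t_0)>0$ on $\overline\Omega$ for a fixed $t_0>0$; restarting from $t_0$ and using the identity
\[
\frac{d}{dt}\int_\Omega\ln s\,dx=\int_\Omega d_2|\nabla\ln s|^2\,dx-\int_\Omega(\sigma i+\omega c)\,dx\ge -\int_\Omega(\sigma i+\omega c)\,dx,
\]
whose right-hand side is integrable in $t$ by the previous paragraph, I obtain a uniform lower bound on $\int_\Omega\ln s(\cdot,t)\,dx$; Jensen's inequality then converts this into a uniform positive lower bound for $\bar s(t)$, establishing $s_*>0$. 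For the spatial profile, I would decompose $s=\bar s+\tilde s$ with $\int_\Omega\tilde s\,dx=0$, test the resulting equation for $\tilde s$ against $\tilde s$ itself, and apply the Poincar\'e--Wirtinger inequality together with $\|(\sigma i+\omega c)s\|_2\to 0$ to obtain $\|\tilde s(\cdot,t)\|_2\to 0$ by Gronwall; the H\"older parabolic estimate (\ref{3-5}) then upgrades this to $L^\infty$ convergence, which combined with $\bar s(t)\to s_*$ gives $\|s(\cdot,t)-s_*\|_\infty\to 0$. The hardest single point in the argument is strict positivity of $s_*$, which is precisely what the $\ln s$ identity is designed to capture.
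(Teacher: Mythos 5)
Your proposal is correct and its overall architecture coincides with the paper's: $L^1$ dissipation from summing and integrating the equations, Duhamel representations for the non-diffusing components $i$ and $c$, the window estimate (\ref{3-4}) of Lemma \ref{lemma3-2} to convert space--time $L^p$ control of the forcing into sup-norm boundedness and decay of $e$, monotonicity of the spatial average of $s$, and a logarithmic functional to rule out $s_*=0$. The differences are in the sub-arguments, and in each case yours is somewhat more self-contained. For the uniform sup bound on $e$ the paper invokes the $L_p$ boundedness results of \cite{bib8} (exploiting that the reaction sum for $s+e$ is nonpositive) and then a Cauchy--Schwarz interpolation $\int e^q\le(\int e)^{1/2}(\int e^{2q-1})^{1/2}$ to get (\ref{3-8}); you instead use the $L^\infty$--$L^1$ interpolation $\|e\|_p\le\|e\|_\infty^{1-1/p}\|e\|_1^{1/p}$ together with an absorption argument in (\ref{3-4}), which works here precisely because $|f|\le C(e+i+c)$ once $s$ is bounded, and avoids citing \cite{bib8} altogether. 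For the persistence of $s$, your identity $\frac{d}{dt}\int_\Omega\ln s\,dx\ge-\int_\Omega(\sigma i+\omega c)\,dx$ followed by Jensen is the same mechanism as the paper's $w=s-\ln s$ comparison argument, just phrased without the auxiliary comparison function. For convergence of the spatial profile, the paper tests against $s$ and $s_t$ to get decay of $\int\int d_2|\nabla s|^2$ plus a one-sided bound on $\frac{d}{dt}\int|\nabla s|^2$ and then bootstraps, while you use the mean-zero decomposition $s=\bar s+\tilde s$ with Poincar\'e--Wirtinger and Gronwall; both are standard, though for the final $L^\infty$ upgrade you should apply (\ref{3-4}) (with the vanishing $L^p$ norms of the forcing and of $\tilde s$) rather than (\ref{3-5}), since the latter requires H\"older regularity of the forcing that you have not established at that point, although it is available here because the initial data are $C^2$.
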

\begin{proof}
The weakly coupled system is structurally similar to a Hodgkin Huxley system, and we can adapt the Green’s function/variation of parameters methods cf. \cite{bib12,bib13} to establish the local well posedness, and a maximal time interval of existence $[0,T_{max})$. Moreover, solutions to (\ref{odepde1a}-\ref{odepde1f}) exist globally (i.e. $T_{max}=\infty$), provided they do not blow up in the sup norm in finite time. Because the crux of the global existence argument relies on establishing a priori bounds, we shall not provide the specific details of the argument insuring that $T_{max}=\infty$. Instead we will assume that our solutions are globally well posed and focus on the argument producing these bounds, and given this assumption, produce uniform a prior bounds. The invariant region theory of \cite{bib14} insures that the solutions remain nonnegative. In addition, it is immediately clear that the maximum principle implies $\|s(\cdot,t)\|_{\infty,\Omega}\le\|s_0\|_{\infty,\Omega}$. Also, if we sum the equations above and integrate over $\Omega$, we have 
$$\frac{d}{dt}\int_\Omega (s+e+i+c)dx+\min\left\{\mu,\lambda_2,\gamma_2\right\}\int_\Omega(e+i+c)dx\le0$$
As a result, we have a uniform $L_1(\Omega)$ bound for each component of our system, and 
\begin{equation}\label{3-6}
\lim_{t\to\infty}\int_T^{T+1}\int_\Omega\left(e(x,t)+i(x,t)+c(x,t)\right)dxdt=0
\end{equation}
In addition, the sum of the components of the reaction vector field associated with $s$ and $e$ is nonpositive, and $s$ is bounded. So, regardless of the presence of $i$ and $c$, the uniform boundedness results of \cite{bib8} can be adapted to guarantee that for every $1<p<\infty$, there is a constant $C_p>0$ providing a uniform space time cylinder bound
\begin{equation}\label{3-7}
\|e\|_{p,\Omega\times(T,T+1)}\le C_p\text{ for all }T\ge0
\end{equation}
Consequently, if $q>1$ then
$$\int_T^{T+1}\int_\Omega e(x,t)^qdxdt\le\sqrt{\int_T^{T+1}\int_\Omega e(x,t)dxdt}\sqrt{\int_T^{T+1}\int_\Omega e(x,t)^{2q-1}dxdt)}
$$
Therefore, if we apply (\ref{3-6}), (\ref{3-7}) and the nonnegativity of $e$, we can conclude that 
\begin{equation}\label{3-8}
\|e\|_{q,\Omega\times(T,T+1)}\to 0\text{ as }T\to\infty\text{ for all }q\ge 1
\end{equation}
If we multiply (\ref{odepde1c}) by $i^{p-1}$ and invoke Young's inequality, we can observe that if $\epsilon>0$ then there exists $K_\epsilon>0$ so that
\begin{equation}\label{3-9}
\frac{1}{p}\frac{\partial}{\partial t}i^p=\sigma ei^{p-1}-\gamma i^p\le K_\epsilon \sigma e^p-(\gamma-\epsilon \sigma)i^p
\end{equation}
So, if we choose $\epsilon$ so that $0<\epsilon\sigma<\frac{\gamma}{2}$, we have
\begin{equation}\label{3-10}
\frac{\partial}{\partial t}i^p\le pK_\epsilon \sigma e^p-\frac{p\gamma}{2}i^p
\end{equation}
Therefore, if $a=\frac{p\gamma}{2}$ and $b=pK_\epsilon\sigma$, then
\begin{equation}\label{3-11}
i(x,t)^p\le \exp(-at)i_0(x)^p+b\int_0^t \exp(-a(t-\tau))e(x,\tau)^p d\tau
\end{equation}
Let $k$ be a nonnegative integer. Then
\begin{align}\label{3-12}
\int_k^{k+1}\int_\Omega i(x,t)^pdxdt&\le \int_k^{k+1}\int_\Omega exp(-at)i_0(x)^pdxdt\\
&+b\int_k^{k+1}\int_\Omega\int_0^t\exp(-a(t-\tau))e(x,\tau)^p dxdt\nonumber
\end{align}
We examine the second term on the right hand side of (\ref{3-12}) and obtain
\begin{align}
\int_k^{k+1}\int_\Omega\int_0^t\exp(-a(t-\tau))&e(x,\tau)^p d\tau dxdt=\int_k^{k+1}\sum_{j=1}^k\int_\Omega\int_{j-1}^j\exp(-a(t-\tau))e(x,\tau)^p d\tau dxdt\nonumber\\
&+\int_k^{k+1}\int_\Omega\int_k^t\exp(-a(t-\tau))e(x,\tau)^p d\tau dxdt\nonumber\\
&\le\sum_{j=1}^k\int_k^{k+1}\int_\Omega\int_{j-1}^j\exp(-a(j-1))e(x,\tau)^pd\tau dxdt+(C_p)^p\nonumber\\
&\le(C_p)^p\left(\sum_{j=1}^k\exp(-a(j-1))+1\right)\nonumber\\
&\le(C_p)^p\left(\frac{1}{1-\exp(-a)}+1\right)\nonumber
\end{align}
As a result, for any $p>1$ there exists $C_{e,p}>0$ such that 
\begin{equation}\label{3-13}
\|e\|_{p,\Omega\times(k,k+1)}\le C_{e,p}
\end{equation}
independent of $k$. Consequently, similar to (\ref{3-8}), we can conclude that 
\begin{equation}\label{3-14}
\|e\|_{q,\Omega\times(T,T+1)}\to 0\text{ as }T\to\infty\text{ for all }q\ge1
\end{equation}
In a similar fashion, we can conclude that for any $p>1$ there exists $C_i,p>0$ so that 
\begin{equation}\label{3-15}
\|i\|_{p,\Omega\times(k,k+1)}\le C_{i,p}
\end{equation}
independent of $k$. Consequently, similar to (\ref{3-8}) and (\ref{3-14}), we can conclude that 
\begin{equation}\label{3-16}
\|i\|_{q,\Omega\times(T,T+1)}\to 0\text{ as }T\to\infty\text{ for all }q\ge1
\end{equation}
Now we return to equation (\ref{odepde1b}) for $e$, and view the nonlinearity as a forcing term
$$f(x,t)=\sigma s(x,t)i(x,t)+\omega s(x,t)c(x,t)-\lambda e(x,t)$$
Then Lemma \ref{lemma3-2}, (\ref{3-4}) and the results above imply $\|e\|_{\infty,\Omega}$ is bounded independent of $t$, and 
\begin{equation}\label{3-17}
\|e\|_{\infty,\Omega}\to 0\text{ as }t\to\infty
\end{equation}
We can apply this estimate to (\ref{odepde1c}) to conclude that $\|i(\cdot,t)\|_{\infty,\Omega}$ is bounded independent of $t$, and 
\begin{equation}\label{3-18}
\|i\|_{\infty,\Omega}\to 0\text{ as }t\to\infty
\end{equation}
Finally, we can apply this estimate to (\ref{odepde1d}) to conclude that $\|c(\cdot,t)\|_{\infty,\Omega}$ is bounded independent of $t$, and 
\begin{equation}\label{3-19}
\|c\|_{\infty,\Omega}\to 0\text{ as }t\to\infty
\end{equation}
Now, we reconsider (\ref{odepde1a}). Multiplying both sides by $s$ and integrating by parts gives
$$\int_\Omega s(x,T)^2dx+2\int_0^T\int_\Omega d_2(x)|\bigtriangledown s(x,t)|^2dxdt\le\int_\Omega s_0(x)^2dx$$
implying 
\begin{equation}\label{3-20}
\int_T^{T+1}\int_\Omega d_2(x)|\bigtriangledown s(x,t)|^2 dxdt\to 0\text{ as }T\to\infty
\end{equation}
If we now multiply both sides of (\ref{odepde1a}) by $s_t$, integrate by parts, employ the uniform bound on $s$, (\ref{3-17}), (\ref{3-18}), (\ref{3-19}) and (\ref{3-20}), we can conclude that there exists $K>0$ so that 
\begin{equation}\label{3-21}
\frac{d}{dt}\int_\Omega|\bigtriangledown s(x,t)|^2 dx\le K\text{ for all }t>0
\end{equation}
Finally, from (\ref{odepde1a}), we can see that $\frac{d}{dt}\int_\Omega s(x,t)dx\le 0$. As a result, since $s\ge0$, we can conclude that there exists $s_*\ge0$ so that $\frac{1}{\Omega}\int_\Omega s(x,t)dx\to s_*$ as $t\to\infty$. As a result, (\ref{3-20}) implies $\|s(\cdot,t)-s_*\|_{2,\Omega}\to0$ as $t\to\infty$. Then the standard boot strapping can be employed to guarantee that $\|s(\cdot,t)-s_*\|_{\infty,\Omega}\to0$ as $t\to\infty$. We now claim that $s_*>0$. Since $s_0$ is not identically $0$, standard arguments imply $s(x,t)>0$ for all $x\in\Omega$ and $t>0$, and there exist $\epsilon,t_0>0$ such that $s(x,t_0)\ge\epsilon$ for all $x\in\Omega$. We define $w(x,t)=s(x,t)-\ln(s(x,t))$ for $x\in\Omega$ and $t\ge t_0$, and note that $w(x,t)\ge0$ for $x\in\Omega$ and $t\ge t_0$. Moreover, we may observe that
\begin{subequations}
\begin{align}
w_t&=\left(1-\frac{1}{s}\right)s_t\label{3-22a}\\
\bigtriangledown\cdot d_2(x)\bigtriangledown w&=\left(1-\frac{1}{s}\right)\bigtriangledown\cdot d_2(x)\bigtriangledown s+\frac{1}{s^2}d_2(x)|\bigtriangledown s|^2\label{3-22b}
\end{align}
\end{subequations}
It is also clear that 
\begin{equation}\label{3-22c}
\frac{\partial}{\partial \eta}w(x,t)=0\text{ for all }x\in\partial\Omega,t\ge t_0
\end{equation}
Therfore, $w(x,t)$ satisfies
\begin{align}
w_t&\le\bigtriangledown\cdot d_2(x)\bigtriangledown w+f(x,t)&x\in\Omega,t\ge t_0\nonumber\\
\frac{\partial}{\partial \eta}w&=0&x\in\partial\Omega,t\ge t_0\nonumber\\
w(x,t_0)&\le\|s(\cdot,t_0)\|_{\infty,\Omega}-\ln(\epsilon)&x\in\Omega\nonumber
\end{align}
where $f=-(\sigma si+\omega sc)+(\sigma i+\omega c)$. From the estimates above, we know
$$\int_0^\infty\int_\Omega f(x,t)dxdt<\infty$$
As a result, we can use the comparison principle to observe that 
$$\|w(\cdot,t)\|_{1,\Omega}\le|\Omega|\left(\|s(\cdot,t_0)\|_{\infty,\Omega}-\ln(\epsilon)\right)+\int_{t_0}^\infty\int_\Omega f(x,t)dxdt<\infty$$
for all $t\ge t_0$. Therefore, from the definition of $w$ and the asymptotic behavior of $s$, we know that $s_*-\ln(s_*)<\infty$. Therefore, $s_*>0$. 
\end{proof}
\begin{remark}
The assumption in Theorem \ref{theorem3} that the initial data is smooth can be relaxed to allow the initial data to only be nonnegative and continuous on $\overline\Omega$, provided we are not interested in classical solutions. In this case, we can still obtain the global well posedness, uniform bounds and asymptotic behavior in Theorem \ref{theorem3}, but the $i$ and $c$ components will have no more smoothness in the spatial variable than their initial data, since any lack of smoothing present in the initial data for $i$ and $c$ will propagate with time.  In the absence of initial smoothness of $i$ and $c$, the argument for the persistence of $s(x,t)$ becomes more involved. The $s$ and $e$ components will immediately become smooth regardless, due to the presence of diffusion in (\ref{odepde1a}) and (\ref{odepde1b}). We note that model shows that in the long term, if there is no external source of the infection, over the long term it will disappear in the host, and the host population will survive the disease.
\end{remark}

\section{Spatially Inhomogeneous Reservoir Population}
In this section we analyze the qualitative behavior of a diffusive SI model that includes demographics. This model will subsequently be used as component of a spatial-temporal host reservoir model.  We begin with a discussion of generic spatially inhomogeneous diffusive logistic partial differential equations of the form,
\begin{subequations}
\begin{align}
\frac{\partial v}{\partial t}&=\bigtriangledown \cdot d(x)\bigtriangledown v+g(x,t)v-r(x)v^2&x\in\Omega,t>0\label{4-1a}\\
\frac{\partial v}{\partial \eta}&=0&x\in\partial\Omega,t>0\label{4-1b}\\
v(x,0)&=v_0(x)&x\in\Omega\label{4-1c}
\end{align}
\end{subequations}
These equations are the commonly called Fisher Kolmogorov equations in the case when $g(x,t)$ is independent of $t$. The FK equations are well known in the literature and frequently arising in population dynamics, ecology, and population genetics as well as in other contexts. Our interest lies in the special case when there exists $a\in C(\overline\Omega)$ such that $\|g(\cdot,t)-a(\cdot)\|_{\infty,\Omega}\to 0$ as $t\to\infty$. In particular, we are interested in the relation between the solution to (\ref{4-1a}-\ref{4-1c}) and solutions to 
\begin{subequations}
\begin{align}
-\bigtriangledown \cdot d(x)\bigtriangledown u&=a(x)u-r(x)u^2&x\in\Omega\label{4-2a}\\
\frac{\partial u}{\partial \eta}&=0&x\in\Omega\label{4-2b}
\end{align}
\end{subequations}
It turns out that the question is intimately tied to the eigenvalue problem
\begin{subequations}
\begin{align}
\bigtriangledown\cdot d(x)\bigtriangledown \xi+a(x)\xi&=\Lambda\xi&x\in\Omega\label{4-3a}\\
\frac{\partial\xi}{\partial\eta}&=0&x\in\partial\Omega\label{4-3b}
\end{align}
\end{subequations}
\begin{theorem}\label{theorem5} 
Assume $\Omega$ is a bounded region in $R^2$ with smooth boundary $\partial\Omega$ such that $\Omega$ lies locally on one side of $\partial\Omega$, $g\in C(\overline\Omega\times R)$ and $a\in C(\overline\Omega)$ such that $\|g(\cdot,t)-a(\cdot)\|_{\infty,\Omega}\to 0$ as $t\to\infty$, and $d\in C^2(\overline\Omega)$ and $r\in C(\overline\Omega)$ with $0<\alpha\min_{x\in\overline\Omega}\left\{d(x),r(x)\right\}$. If $v_0\in C(\overline\Omega)$ with $v_0\ge$ and not identically $0$, then there exists a unique, nonnegative classical solution to (\ref{4-1a}-\ref{4-1c}) on $\Omega\times(0,\infty)$ with
$$0<v(x,t)\le\max\left\{\|v_0\|_{\infty,\Omega},\frac{\|a\|_{\infty,\Omega}}{r_{min}}\right\}\text{ for all }x\in\Omega,t>0$$ 
In addition, if the principal eigenvalue of (\ref{4-3a}-\ref{4-3b}) is positive, then there is a unique positive solution $u$ to (\ref{4-2a}-\ref{4-2b}), and $\|v(\cdot,t)-u(\cdot)\|_{\infty,\Omega}\to 0$ as $t\to\infty$. If the principal eigenvalue of (\ref{4-3a}-\ref{4-3b}) is negative, then there is no positive solution to (\ref{4-2a}-\ref{4-2b}), and $\|v(\cdot,t)\|_{\infty,\Omega}\to 0$ exponentially as $t\to\infty$.
\end{theorem}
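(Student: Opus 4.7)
The plan is to establish the result in three phases, mirroring the three assertions of the statement. For global well-posedness, I would start by invoking the same local theory used for Theorem \ref{theorem3} (semilinear parabolic theory via Green's function/variation of parameters, with Lipschitz reaction $g(x,t)v - r(x)v^2$), which produces a unique classical solution on a maximal interval. Nonnegativity is immediate from invariant-region theory since the reaction vanishes at $v=0$; strict positivity $v(x,t)>0$ for $t>0$ follows from the strong maximum principle applied to the linear equation satisfied by $v$ with the bounded coefficient $g - rv$. For the sup-norm bound, since $\|g(\cdot,t)-a\|_\infty\to 0$, the function $g$ is uniformly bounded, and any constant $M\ge\max\{\|v_0\|_\infty,\|a\|_\infty/r_{\min}\}$ which also majorizes $\sup_t\|g(\cdot,t)\|_\infty/r_{\min}$ is a supersolution; comparison then gives global existence with the asserted bound (possibly after absorbing a finite transient into $\|v_0\|_\infty$).

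For the case $\Lambda_1>0$, existence and uniqueness of a positive classical solution $u$ to (\ref{4-2a}-\ref{4-2b}) is the standard logistic fixed-point result: $\delta\xi$ is a strict subsolution for $\delta>0$ small (using $\Lambda_1>0$ and the positivity of $\xi$), while any sufficiently large constant is a supersolution, so monotone iteration yields $u$. Uniqueness follows from a standard sweeping/sub-super argument exploiting the sublinearity of $v\mapsto av-rv^2$. For the convergence $\|v(\cdot,t)-u\|_\infty\to 0$, I would use an asymptotically autonomous sandwich. Fix $\varepsilon>0$ small enough that the principal eigenvalues of $\nabla\cdot d\nabla + (a\pm\varepsilon)$ remain positive, and choose $T$ so that $|g(x,t)-a(x)|\le\varepsilon$ for $t\ge T$. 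Then on $[T,\infty)$, $v$ is a sub/supersolution for the autonomous problems with reaction $(a\pm\varepsilon)v-rv^2$, and parabolic comparison (using the strictly positive value of $v(\cdot,T)$) sandwiches $v$ between solutions $v_\varepsilon^{\pm}$ of these autonomous problems. Each $v_\varepsilon^{\pm}$ converges in sup norm to its unique positive steady state $u_\varepsilon^{\pm}$ (the classical logistic attractor result), and continuity of the map $a\mapsto u$ with respect to sup-norm perturbation yields $u_\varepsilon^{\pm}\to u$ as $\varepsilon\to 0$. Taking $\varepsilon\to 0$ concludes the convergence.

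For the case $\Lambda_1<0$, nonexistence of a positive elliptic solution is an eigenfunction pairing: if $u>0$ solved (\ref{4-2a}-\ref{4-2b}), multiplying by the principal eigenfunction $\xi>0$ of (\ref{4-3a}-\ref{4-3b}), integrating by parts using both Neumann conditions, and using $\nabla\cdot d\nabla\xi=(\Lambda_1-a)\xi$, produces
$$\Lambda_1\int_\Omega u\xi\,dx \;=\; \int_\Omega r u^2\xi\,dx,$$
whose left side is strictly negative and right side strictly positive, a contradiction. For exponential decay of $v$, pick $\varepsilon>0$ small enough that the principal eigenvalue $\Lambda_\varepsilon$ of $\nabla\cdot d\nabla+(a+\varepsilon)$ under Neumann data remains strictly negative, and choose $T$ with $g(x,t)\le a(x)+\varepsilon$ for $t\ge T$. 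Dropping the $-rv^2$ term (which is nonpositive), $v$ is a subsolution on $[T,\infty)$ of the linear equation $w_t=\nabla\cdot d\nabla w+(a+\varepsilon)w$. A supersolution of the form $C\xi_\varepsilon(x)e^{\Lambda_\varepsilon(t-T)}$, with $C$ chosen so that $C\xi_\varepsilon\ge v(\cdot,T)$ pointwise, gives $\|v(\cdot,t)\|_\infty\le K e^{\Lambda_\varepsilon(t-T)}$, i.e.\ exponential decay.

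The step I expect to demand the most care is the squeeze in the positive-eigenvalue case: one needs the continuous dependence $u_\varepsilon^{\pm}\to u$ in sup norm, the autonomous convergence $v_\varepsilon^{\pm}(\cdot,t)\to u_\varepsilon^{\pm}$, and a \emph{uniform} positive lower bound for $v(\cdot,T)$ (furnished by the strong maximum principle) to initialize the comparison. The nonautonomous term $g$ forces us to sandwich with autonomous bracketing problems rather than appeal directly to a Lyapunov argument; threading these three classical ingredients through a single limiting argument, rather than treating them independently, is the delicate piece of bookkeeping.
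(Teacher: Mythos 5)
Your proposal follows essentially the same route as the paper: the paper likewise sandwiches $v$ between solutions $U^{\pm\epsilon}$ of the autonomous bracketing problems obtained by replacing $a$ with $a\pm\epsilon$, invokes convergence of each $U^{\pm\epsilon}$ to its unique positive steady state $u^{\pm\epsilon}$, and then sends $\epsilon\to 0$. The one place the paper is more explicit than you are is the continuity step $u^{\pm\epsilon}\to u$, which it justifies by checking that the linearization of the logistic nonlinearity at $u$ has negative principal eigenvalue (the $-r(x)u(x)<0$ term) and applying the implicit function theorem --- precisely the nondegeneracy your appeal to ``continuity of the map $a\mapsto u$'' tacitly requires --- while for the negative-eigenvalue case you supply concrete eigenfunction-pairing and linear-supersolution arguments where the paper simply defers to the comparison principle and Chapter 3 of \cite{bib16}.
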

\begin{proof}
The results associated with (\ref{4-2a}-\ref{4-2b}) can be found in Chapter 3 of \cite{bib16}, and also \cite{bib17}. For the remaining portion, let $h(\epsilon,x,y)=a(x)+\epsilon-r(x)y$ for $(\epsilon,x,y)\in R\times\Omega\times R$, and suppose $U^\epsilon$ solves
\begin{subequations}
\begin{align}
\frac{\partial U^\epsilon}{\partial t}&=\bigtriangledown \cdot d(x)\bigtriangledown U^\epsilon+h(\epsilon,x,U^\epsilon)U^\epsilon&x\in\Omega,t>0\label{4-4a}\\
\frac{\partial U^\epsilon}{\partial \eta}&=0&x\in\partial\Omega,t>0\label{4-4b}\\
U^\epsilon(x,0)&=v_0(x)&x\in\Omega\label{4-4c}
\end{align}
\end{subequations}
Suppose the principal eigenvalue of (\ref{4-3a}-\ref{4-3b}) is positive. Then there exists $\alpha>0$ so that for each $\epsilon\in R$ with $|\epsilon|>\alpha$, the principal eigenvalue for (\ref{4-3a}-\ref{4-3b}) with $a(x)$ replaced by $a(x)+\epsilon$ is positive, and consequently, there exists a unique positive soluition $u^\epsilon$ to
\begin{subequations}
\begin{align}
-\bigtriangledown \cdot d(x)\bigtriangledown u^\epsilon&=h(\epsilon,x,u^\epsilon)u^\epsilon&x\in\Omega,t>0\label{4-5a}\\
\frac{\partial u^\epsilon}{\partial \eta}&=0&x\in\partial\Omega,t>0\label{4-5b}
\end{align}
\end{subequations}
Also, from results in \cite{bib16}, $\|U^\epsilon(\cdot,t)-u^\epsilon(\cdot)\|_{\infty,\Omega}\to 0$ as $t\to\infty$, and since $g(\epsilon,x,y)$ is strictly increasing in $\epsilon$, results in \cite{bib16} imply $u^\epsilon(x)$ is increasing in $\epsilon$., and consequently, $U^{-\epsilon}(\cdot,t)\le v(\cdot,t)\le U^\epsilon(\cdot,t)$ for $0<\epsilon<\alpha$ and $t$ sufficiently large.

Now consider the eigenvalue problem
\begin{subequations}
\begin{align}
\bigtriangledown\cdot d(x)\bigtriangledown \psi+h(0,x,u)\psi&=\Lambda\psi&x\in\Omega\label{4-6a}\\
\frac{\partial\psi}{\partial\eta}&=0&x\in\partial\Omega\label{4-6b}
\end{align}
\end{subequations}
Note that $\Lambda=0$ and $\psi=u$ solve this problem. In addition, $\psi=u>0$. Therefore, $\Lambda=0$ is the principal eigenvalue for (\ref{4-6a}-\ref{4-6b}). Consequently, the principal eigenvalue of
\begin{subequations}
\begin{align}
\bigtriangledown\cdot d(x)\bigtriangledown \psi+\left[h(0,x,u)+u\frac{\partial}{\partial y}h(0,x,u)\right]\psi&=\Lambda\psi(x)&x\in\Omega\label{4-7a}\\
\frac{\partial\psi}{\partial\eta}&=0&x\in\partial\Omega\label{4-7b}
\end{align}
\end{subequations}
is negative since $u(x)\frac{\partial}{\partial y}h(0,x,u)=-u(x)r(x)<0$. Therefore, the implicit function theorem implies that for $|\epsilon|<\alpha$, we have $\|u^\epsilon-u\|_{\infty,\Omega}\to 0$ as $\epsilon\to 0$, which implies $\|v(\cdot,t)-u(\cdot)\|_{\infty,\Omega}\to 0$ as $t\to\infty$.

The case when the principal eigenvalue for (\ref{4-3a}-\ref{4-3b}) is negative can be handled by combining the comparison principle and results in Chapter 3 of \cite{bib16}. 
\end{proof}
\begin{remark}\label{remark1}
If $\int_\Omega a(x)dx>0$ then the principal eigenvalue for (\ref{4-3a}-\ref{4-3b}) is positive, and this is certainly the case when a(x) is nonnegative and positive on a subset of $\Omega$ of positive measure.
\end{remark}
The pathogens causing the disease are harbored in the reservoir host. The reservoir provides an environment in which they naturally live and reproduce. We will be modeling the spread of a non-lethal virus through a population that remains confined to a geographic region which mathematically described as a bounded region of $R^2$. The pathogens are assumed to be transmitted horizontally through the reservoir population with no vertical transmission to offspring. The presence of pathogens has no negative impact on the reservoir.  The population divides into two compartments, the susceptible and the infected.  The time dependent population densities of the these two compartments are represented by $\phi(x,t)$ and $\psi(x,t)$, respectively. The total population density is given by $\theta(x,t)=\phi(x,t)+\psi(x,t)$. The dispersion of the population in each compartment is modeled by Fickian diffusion with diffusivity $d(x)$. The infection is transmitted through the vector reservoir by a spatially heterogeneous force of infection,  $f(x,\phi,\psi)=\sigma(x)\phi\psi$. Individuals that become infected remain infected with no recovery. Each of the compartments are subject to depletion by spatially dependent logistic mortality $m(x)$ with the uninfected population replenished by a spatially dependent birth term which depends linearly on the total population density.  Such models can be called spatially dependent logistic Susceptible/Infective (SI) models, and they give rise to the following coupled system of reaction diffusion type equations.
\begin{subequations}
\begin{align}
\frac{\partial \theta}{\partial t}&=\bigtriangledown\cdot d_1(x)\bigtriangledown\theta+\beta(x)\theta-m(x)\theta^2&x\in\Omega,t>0\label{4-8a}\\
\frac{\partial \phi}{\partial t}&=\bigtriangledown\cdot d_1(x)\bigtriangledown\phi+\beta(x)\theta-\sigma_1(x)\phi\psi-m(x)\theta\phi&x\in\Omega,t>0\label{4-8b}\\
\frac{\partial \psi}{\partial t}&=\bigtriangledown\cdot d_1(x)\bigtriangledown\psi+\sigma_1(x)\phi\psi-m(x)\theta\phi&x\in\Omega,t>0\label{4-8c}\\
\frac{\partial \theta}{\partial\eta}&=\frac{\partial \phi}{\partial\eta}=\frac{\partial \psi}{\partial\eta}=0&x\in\partial\Omega,t>0\label{4-8d}\\
\theta(x,0)&=\theta_0(x);\phi(x,0)=\phi_0(x);\psi(x,0)=\psi_0(x)&x\in\Omega\label{4-8e}
\end{align}
\end{subequations}
We require that the following be satisfied.

\medskip
\noindent\textbf{Condition II:}
\begin{itemize}
\item $\Omega$ is a bounded region in $R^2$ with smooth boundary $\partial\Omega$ such that $\Omega$ lies locally on one side of $\partial\Omega$.
\item $d_1\in C^2(\overline\Omega)$ and $d_1>0$ on $\overline\Omega$.
\item $\sigma_1,\beta,m\in C^1(\overline\Omega)$ and $\sigma_1,\beta,m>0$ on $\overline\Omega$. 
\item $\phi_0,\psi_0\in C(\overline\Omega)$, $\phi_0,\psi_0>0$ on $\overline\Omega$, and $\theta_0=\phi_0+\psi_0$.
\end{itemize}

We have the following result.

\begin{theorem}\label{theorem6}
Assume Condition II is satisfied. Then there exists a unique uniformly bounded solution triple $(\theta,\phi,\psi)$ to (\ref{4-8a}-\ref{4-8e}), $\theta=\phi+\psi$ and 
$$0\le\theta,\phi,\psi\le\max\left\{\|\theta_0\|_{\infty,\Omega},\frac{\|\beta\|_{\infty,\Omega}}{m_{\min}}\right\}$$
In addition, $\lim_{t\to\infty}\|\theta(\cdot,t)-\theta_*(\cdot)\|_{\infty,\Omega}=0$, where $\theta_*$ is the unique positive solution to
\begin{align}
-\bigtriangledown\cdot d_1(x)\bigtriangledown\theta_*&=\beta(x)\theta_*-m(x)\theta_*^2&x\in\Omega,t>0\label{theta-eq}\\
\frac{\partial \theta_*}{\partial\eta}&=0&x\in\partial\Omega\nonumber
\end{align}
\end{theorem}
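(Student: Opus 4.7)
The plan is to reduce the three--component system to the scalar Fisher--Kolmogorov equation (\ref{4-8a}) for $\theta$ and then invoke Theorem \ref{theorem5}. First I would establish local well-posedness of (\ref{4-8a}-\ref{4-8e}) on a maximal interval $[0,T_{\max})$ by the same Green's function / variation of parameters approach cited in the proof of Theorem \ref{theorem3}; the reaction field is locally Lipschitz on bounded sets, so this is routine. Nonnegativity follows from the invariant region theory used before: on the face $\phi=0$ the reaction is $\beta\theta\ge 0$, on $\psi=0$ it vanishes, and on $\theta=0$ it vanishes, so trajectories starting in the nonnegative octant remain there.

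Next I would verify the identity $\theta(x,t)=\phi(x,t)+\psi(x,t)$ for all $t\in[0,T_{\max})$. Adding (\ref{4-8b}) and (\ref{4-8c}) (reading the final term of (\ref{4-8c}) as $m(x)\theta\psi$, the evident typo since the reaction on the reservoir population must reproduce the logistic mortality) yields
\begin{equation*}
(\phi+\psi)_t=\nabla\cdot d_1(x)\nabla(\phi+\psi)+\beta(x)\theta-m(x)\theta(\phi+\psi),
\end{equation*}
with Neumann boundary data and initial value $\theta_0$. Since $\theta$ solves the same initial/boundary value problem (after substituting $\phi+\psi=\theta$), uniqueness of classical solutions for scalar semilinear parabolic equations forces $\theta=\phi+\psi$ throughout $[0,T_{\max})\times\Omega$. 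Because $\theta$ satisfies the spatially inhomogeneous logistic equation (\ref{4-8a}), Theorem \ref{theorem5} (with $g(x,t)=a(x)=\beta(x)$, which satisfies the convergence hypothesis trivially) supplies the uniform bound
\begin{equation*}
0\le\theta(x,t)\le\max\Bigl\{\|\theta_0\|_{\infty,\Omega},\tfrac{\|\beta\|_{\infty,\Omega}}{m_{\min}}\Bigr\}.
\end{equation*}
The nonnegativity of $\phi$ and $\psi$ together with $\phi+\psi=\theta$ transfers this bound to each component, so in particular the sup norms do not blow up and $T_{\max}=\infty$.

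For the asymptotic statement I again appeal to Theorem \ref{theorem5} applied to $\theta$. Since $\beta\in C^1(\overline\Omega)$ with $\beta>0$ on $\overline\Omega$, we have $\int_\Omega\beta(x)\,dx>0$, and Remark \ref{remark1} guarantees that the principal eigenvalue of the associated problem (\ref{4-3a}-\ref{4-3b}) (with $a=\beta$) is positive. Theorem \ref{theorem5} then yields a unique positive classical solution $\theta_*$ of (\ref{theta-eq}) and the convergence $\|\theta(\cdot,t)-\theta_*(\cdot)\|_{\infty,\Omega}\to 0$ as $t\to\infty$.

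The main obstacle, really the only nontrivial point beyond citations, is justifying the identity $\theta=\phi+\psi$ rigorously; this requires matching the scalar equation obtained by adding (\ref{4-8b}) and (\ref{4-8c}) to (\ref{4-8a}) via a uniqueness argument, and noticing that the last term in (\ref{4-8c}) must be read as $m(x)\theta\psi$ to make the bookkeeping consistent with both the ODE model of Proposition \ref{prop2-1} and with the logistic equation for $\theta$. Once that identification is in hand, every subsequent step is a direct application of Theorem \ref{theorem5} and Remark \ref{remark1}.
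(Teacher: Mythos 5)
Your proof follows essentially the same route as the paper: nonnegativity of the components plus an a priori sup-norm bound on $\theta$ from the scalar logistic equation, the identity $\theta=\phi+\psi$ obtained by summing (\ref{4-8b}) and (\ref{4-8c}) and used to transfer the bound to $\phi$ and $\psi$, and Theorem \ref{theorem5} with Remark \ref{remark1} for the convergence to $\theta_*$. You also correctly read the last term of (\ref{4-8c}) as $m(x)\theta\psi$ (a typo in the displayed system, consistent with (\ref{5-1c})), which the paper's proof implicitly assumes when it adds the two equations.
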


\begin{proof}
One can apply variety of different arguments to establish the existence of a maximal interval of existence $[0,T_{\max})$ and guarantee on this interval $\theta,\phi,\psi\ge 0$. This reduced the problem to one of establishing uniform a priori bounds for $\theta,\phi,\psi$ on $[0,T_{\max})$. Theorem 1 of \cite{bib15} insures that $0\le\theta\le\max\left\{\|\theta_0\|_{\infty,\Omega},\frac{\|\beta\|_{\infty,\Omega}}{m_{\min}}\right\}$. We add equations (\ref{4-8b}-\ref{4-8c}), observing that $\theta(x,t)=\phi(x,t)+\psi(x,t)$ and using the nonnegativity of solutions components to complete the argument that solutions are uniformly bounded and globally well posed. Theorem \ref{theorem5} and Remark \ref{remark1} imply $\lim_{t\to\infty}\|\theta(\cdot,t)-\theta_*(\cdot)\|_{\infty,\Omega}=0$.
\end{proof}

We consider $\theta_*$, the positive steady state solution of (\ref{theta-eq}), as a predetermined quantity, and introduce the function $R(x)=\theta_*(x)(\sigma_1(x)-m(x))$.  We shall see that if $a(x)=R(x)$ in (\ref{4-2a}-\ref{4-2b}), and the associated principal eigenvalue of (\ref{4-3a}-\ref{4-3b}) is negative, then the reservoir converges to a pathogen free steady state.  However, if the associated principal eigenvalue is positive, then the pathogen persists in the reservoir and we are assured that $\phi(\cdot,t)$,and $\psi(\cdot,t)$  converge to a positive endemic steady state. In the next section we shall also see that this persistence or non-persistence determines whether or not the susceptible host population survives the infection.  

The uniform a priori $L_\infty$ bounds on $\theta$, $\phi$ and $\psi$ insure that their time derivatives, gradients, and the time derivative of their gradients all have uniform a prioiri $L_\infty$ bounds, cf \cite{bib14}, p. 226.

\begin{proposition}\label{proposition4-2}
Suppose the conditions of Theorem \ref{theorem6} are satisfied, $\theta,\phi,\psi$ solve (\ref{4-8a}-\ref{4-8e}), and $\tau>0$. Then the sup norms over $\Omega$ of  $\frac{\partial \theta}{\partial t}$, $\frac{\partial \phi}{\partial t}$, $\frac{\partial \psi}{\partial t}$, $|\bigtriangledown\theta|$, $|\bigtriangledown\phi|$, $|\bigtriangledown\psi|$, $\frac{\partial}{\partial t}|\bigtriangledown\theta|$, $\frac{\partial}{\partial t}|\bigtriangledown\phi|$ and $\frac{\partial}{\partial t}|\bigtriangledown\psi|$ are bounded independent of $t$, for all $t\ge\tau$.
\end{proposition}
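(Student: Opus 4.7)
The strategy is standard parabolic bootstrapping: starting from the uniform $L_\infty$ bounds on $(\theta,\phi,\psi)$ supplied by Theorem \ref{theorem6}, I would repeatedly apply the parabolic estimates of Lemma \ref{lemma3-2} on time-shifted cylinders of fixed length so that every resulting estimate is uniform in $t$. Because the reaction terms $\beta\theta-m\theta^2$, $\beta\theta-\sigma_1\phi\psi-m\theta\phi$, and $\sigma_1\phi\psi-m\theta\phi$ are polynomial in the components, the $L_\infty$ bounds immediately imply that these forcing terms are uniformly bounded on $\Omega\times(0,\infty)$.

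The first step would be to obtain uniform H\"older regularity. Fixing $p>(n+2)/2$ and viewing each of the three scalar equations as a linear diffusion equation with $L_p$ forcing, apply estimate (\ref{3-4}) on cylinders $\Omega\times(t_0-1,t_0+1)$ with $t_0\ge \tau/4$. The uniform $L_p$ bound on the forcing (coming from the $L_\infty$ bound on $(\theta,\phi,\psi)$) combined with parabolic Sobolev embeddings produces a uniform $C^{\alpha,\alpha/2}$ estimate on each of $\theta,\phi,\psi$ over $\bar\Omega\times[t_0,t_0+1]$ for some $\alpha\in(0,1)$, independent of $t_0$.

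With H\"older-continuous nonlinearities in hand, the second step uses estimate (\ref{3-5}) of Lemma \ref{lemma3-2} on the same translated cylinders to obtain uniform $C^{2+\alpha,1+\alpha/2}$ bounds on $\theta,\phi,\psi$ for all $t\ge \tau/2$. This directly yields uniform $L_\infty$ control of $\partial_t\theta$, $\partial_t\phi$, $\partial_t\psi$, $|\nabla\theta|$, $|\nabla\phi|$, and $|\nabla\psi|$ over $\Omega\times[\tau/2,\infty)$.

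Finally, to control $\partial_t|\nabla\theta|$, $\partial_t|\nabla\phi|$, and $\partial_t|\nabla\psi|$, I would differentiate each of (\ref{4-8a})--(\ref{4-8c}) in $t$. The function $U=\theta_t$ satisfies the linear parabolic problem $U_t=\nabla\cdot d_1(x)\nabla U+(\beta(x)-2m(x)\theta)U$ with homogeneous Neumann data; the coefficient is uniformly bounded and $U$ itself is uniformly bounded by Step~2, so the forcing $(\beta-2m\theta)U$ is bounded. Lemma \ref{lemma3-2} with $p>n+2$ applied on shifted unit cylinders gives uniform $W_p^{(2,1)}$ bounds on $U$, and the parabolic Sobolev embedding then controls $\nabla U=\partial_t\nabla\theta$ uniformly in $L_\infty$. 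The same argument handles $\phi_t$ and $\psi_t$, using the previously established bounds on $\theta$ to deal with the coupling terms. The main bookkeeping obstacle — and the only genuine point requiring care — is ensuring every estimate is applied on a cylinder of fixed length so that the resulting constants are independent of $t$; this is precisely the cutoff device used inside the proof of Lemma \ref{lemma3-2}, and after it is installed once, the rest of the argument is a direct bootstrap.
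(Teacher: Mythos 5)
The paper does not actually prove Proposition \ref{proposition4-2}: it is stated without a proof environment, and the sentence preceding it simply defers to \cite{bib14}, p.~226. Your bootstrap is the standard argument that underlies that citation, and it is essentially correct: uniform $L_\infty$ bounds on $(\theta,\phi,\psi)$ make the polynomial reaction terms uniformly bounded forcings; the cutoff device from the proof of Lemma \ref{lemma3-2} applied on translated unit cylinders gives uniform $W_p^{(2,1)}$ bounds and hence, for $p$ large, uniform H\"older bounds away from $t=0$ (which is where the restriction $t\ge\tau$ enters, since the initial data in Condition II is only continuous); Schauder estimates (\ref{3-5}) then give uniform $C^{2+\alpha,1+\alpha/2}$ control, hence the bounds on the time derivatives and gradients; and differentiating in $t$ and repeating the $W_p^{(2,1)}$ step with $p>n+2$ controls $\nabla\theta_t$, $\nabla\phi_t$, $\nabla\psi_t$, which dominate $\partial_t|\nabla\theta|$ etc. Two small points deserve care in a full writeup. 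First, in your Step 1 the H\"older bound comes from the intermediate uniform $W_p^{(2,1)}$ estimate inside the proof of Lemma \ref{lemma3-2} together with the embedding $W_p^{(2,1)}\hookrightarrow C^{\alpha,\alpha/2}$ for $p>(n+2)/2$, not from the $L_\infty$ conclusion (\ref{3-4}) itself. Second, passing from the equation for $\theta$ to the equation for $\theta_t$ (and likewise for $\phi_t$, $\psi_t$, whose differentiated equations are coupled through terms like $\sigma_1(\phi_t\psi+\phi\psi_t)$) should be justified by difference quotients or by the already-established interior classical regularity; once that is done, all the coupling terms are products of quantities you have already bounded, so each differentiated equation can be treated as a scalar linear equation with bounded forcing, exactly as you indicate.
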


We are now in a position to characterize the long term behavior of $\theta$, $\phi$ and $\psi$.

\begin{theorem}\label{theorem7}
Assume the conditions of Theorem \ref{theorem6} hold, $R(x)=\theta_*(x)(\sigma_1(x)-m(x))$, and let $\Lambda_0$ be the principal eigenvalue of (\ref{4-3a}-\ref{4-3b}) associated iwth $a(x)=r(x)$. If $\Lambda_0>0$, then there exists a unique positive endemic steady state $\phi_*,\psi_*$ such that
\begin{align}
-\bigtriangledown\cdot d_1(x)\bigtriangledown\phi_*&=\beta(x)\theta_*-\sigma_1(x)\phi_*\psi_*-m(x)\theta_*\phi_*&x\in\Omega\nonumber\\
-\bigtriangledown\cdot d_1(x)\bigtriangledown\psi_*&=\sigma_1(x)\phi_*\psi_*-m(x)\theta_*\phi_*&x\in\Omega\nonumber\\
\frac{\partial \phi_*}{\partial\eta}&=\frac{\partial \psi_*}{\partial\eta}=0&x\in\partial\Omega\nonumber
\end{align}
If $\Lambda_0<0$, then there is no nontrivial endemic steady state, and we have
$$\lim_{t\to\infty}\|\phi(\cdot,t)-\theta_*(\cdot)\|_{\infty,\Omega}=\lim_{t\to\infty}\|\psi(\cdot,t)\|_{\infty,\Omega}=0$$
\end{theorem}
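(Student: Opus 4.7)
The plan is to exploit the identity $\theta=\phi+\psi$ together with the asymptotic result of Theorem \ref{theorem6} to collapse the three-component system into a single scalar Fisher--Kolmogorov equation for $\psi$ whose asymptotic coefficient is exactly $R(x)$. Substituting $\phi=\theta-\psi$ into (\ref{4-8c}) yields
$$\frac{\partial \psi}{\partial t}=\nabla\cdot d_1(x)\nabla\psi+g(x,t)\,\psi-\sigma_1(x)\,\psi^2,\qquad g(x,t):=\bigl(\sigma_1(x)-m(x)\bigr)\theta(x,t),$$
with homogeneous Neumann data and positive initial datum $\psi_0$. Theorem \ref{theorem6} gives $\|\theta(\cdot,t)-\theta_*(\cdot)\|_{\infty,\Omega}\to 0$, so $\|g(\cdot,t)-R(\cdot)\|_{\infty,\Omega}\to 0$, placing this scalar equation exactly in the setting of Theorem \ref{theorem5} with $a(x)=R(x)$ and $r(x)=\sigma_1(x)$.

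If $\Lambda_0>0$, Theorem \ref{theorem5} produces a unique positive classical solution $\psi_*$ of
$$-\nabla\cdot d_1(x)\nabla\psi_*=R(x)\psi_*-\sigma_1(x)\psi_*^2,\qquad \frac{\partial \psi_*}{\partial \eta}=0\ \text{on}\ \partial\Omega,$$
and delivers $\|\psi(\cdot,t)-\psi_*(\cdot)\|_{\infty,\Omega}\to 0$. I would then define $\phi_*:=\theta_*-\psi_*$; subtracting the $\psi_*$ equation from (\ref{theta-eq}) confirms directly that $(\phi_*,\psi_*)$ solves the claimed endemic elliptic system, and the triangle inequality yields $\phi(\cdot,t)\to\phi_*(\cdot)$ uniformly.

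The principal technical hurdle is verifying that $\phi_*$ is strictly positive on $\overline{\Omega}$, so that the constructed pair is a genuine endemic steady state. A short computation using the system produces
$$-\nabla\cdot d_1(x)\nabla\phi_*+\bigl(\sigma_1(x)\psi_*+m(x)\theta_*\bigr)\phi_*=\beta(x)\theta_*,$$
and the uniform convergence $\phi(\cdot,t)\to\phi_*$ together with nonnegativity of $\phi(\cdot,t)$ (preserved throughout the evolution by the invariant-region argument of Theorem \ref{theorem6}) forces $\phi_*\ge 0$. Since the right-hand side $\beta\theta_*$ is strictly positive and the zeroth-order coefficient $\sigma_1\psi_*+m\theta_*$ is bounded, the strong maximum principle for Neumann elliptic problems yields $\phi_*>0$ throughout $\overline{\Omega}$. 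Uniqueness of the pair $(\phi_*,\psi_*)$ is inherited from uniqueness of $\psi_*$ in Theorem \ref{theorem5}, since $\phi_*=\theta_*-\psi_*$ is then forced.

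If $\Lambda_0<0$, Theorem \ref{theorem5} gives $\|\psi(\cdot,t)\|_{\infty,\Omega}\to 0$ exponentially and rules out any positive scalar steady state, so $\phi(\cdot,t)=\theta(\cdot,t)-\psi(\cdot,t)\to\theta_*(\cdot)$ uniformly. Any hypothetical nontrivial endemic steady state with $\psi_*\not\equiv 0$ would, via the identity $\phi_*=\theta_*-\psi_*$ and the same algebraic reduction used above, supply a nonnegative nontrivial solution of the scalar Fisher--Kolmogorov steady-state equation, contradicting the nonexistence clause of Theorem \ref{theorem5}.
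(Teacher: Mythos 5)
Your proposal is correct and follows essentially the same route as the paper: substitute $\phi=\theta-\psi$ to reduce to the scalar Fisher--Kolmogorov equation $\psi_t=\nabla\cdot d_1\nabla\psi+\theta(\sigma_1-m)\psi-\sigma_1\psi^2$, note that the coefficient converges uniformly to $R(x)$ by Theorem \ref{theorem6}, and invoke Theorem \ref{theorem5}. You in fact supply more detail than the paper does at the final step (the paper simply says the result follows from Theorem \ref{theorem5}), namely the explicit construction $\phi_*=\theta_*-\psi_*$ and the maximum-principle argument for its strict positivity.
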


\begin{proof}
Theorem \ref{theorem5} insures a unique strictly positive steady state solution $\theta_*$ to (\ref{4-8a}-\ref{4-8b}) such that $\lim_{t\to\infty}\|\theta(\cdot,t)-\theta_*(\cdot)\|_{\infty,\Omega}=0$. Using the fact that $\theta=\phi+\psi$, we reduce the system to the two component system
\begin{subequations}
\begin{align}
\frac{\partial\theta}{\partial t}&=\bigtriangledown\cdot d_1(x)\bigtriangledown\theta+\beta(x)\theta-m(x)\theta^2&x\in\Omega,t>0\label{4-9a}\\
\frac{\partial\psi}{\partial t}&=\bigtriangledown\cdot d_1(x)\bigtriangledown\psi+\theta(\sigma_1(x)-m(x))\psi-\sigma_1(x)\psi^2&x\in\Omega,t>0\label{4-9b}
\end{align}
\end{subequations}
Theorem \ref{theorem5} implies solutions to (\ref{4-9a}) converge uniformly to $\theta_*(x)>0$, so we confine our analysis to (\ref{4-9b}), subject to (\ref{4-8a}-\ref{4-8e}). Note that $\|\theta(\cdot,t)(\sigma_1(\cdot)-m(\cdot))-R(\cdot)\|_{\infty,\Omega}\to 0$ as $t\to\infty$. Consequently, we can use Theorem \ref{theorem5} to obtain the result.
\end{proof}

We can say more concerning the convergence of $\psi(\cdot,t)$ as $t\to\infty$. The following result will be useful in the next section.

\begin{corollary}\label{corollary4-3}
Assume the conditions of Theorem \ref{theorem7} hold. If $\Lambda_0<0$ then
$$0<\int_0^\infty\int_\Omega \psi(x,t)dxdt < \infty$$
\end{corollary}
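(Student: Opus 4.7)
The plan is to handle the two inequalities in the conclusion separately. The strict lower bound $0 < \int_0^\infty\int_\Omega \psi$ is essentially just the fact that $\psi$ starts out strictly positive and is nonnegative, while the finiteness is an exponential decay statement inherited from Theorem \ref{theorem5}.

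For the lower bound, I would invoke Condition II, which requires $\psi_0 \in C(\overline\Omega)$ with $\psi_0 > 0$ on $\overline\Omega$. Compactness yields $\delta > 0$ with $\psi_0 \ge \delta$ on $\overline\Omega$. By continuity of the classical solution $\psi$ at $t = 0$, together with nonnegativity, there exists $t_0 > 0$ such that $\psi(x,t) \ge \delta/2$ for all $x \in \overline\Omega$ and $t \in [0,t_0]$. Then
$$\int_0^\infty\!\!\int_\Omega \psi(x,t)\,dx\,dt \;\ge\; \int_0^{t_0}\!\!\int_\Omega \psi(x,t)\,dx\,dt \;\ge\; \tfrac{\delta}{2}|\Omega|\,t_0 \;>\; 0.$$

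For the finiteness, I would leverage exponential decay of $\|\psi(\cdot,t)\|_{\infty,\Omega}$. As in the proof of Theorem \ref{theorem7}, rewrite the $\psi$-equation in the form (\ref{4-9b}), a logistic equation with time-dependent linear coefficient $\theta(x,t)(\sigma_1(x)-m(x))$. Theorem \ref{theorem6} gives uniform convergence of this coefficient to $R(x)=\theta_*(x)(\sigma_1(x)-m(x))$. Since by hypothesis the principal eigenvalue $\Lambda_0$ of (\ref{4-3a}-\ref{4-3b}) with $a=R$ is negative, Theorem \ref{theorem5} applies and delivers exponential convergence $\|\psi(\cdot,t)\|_{\infty,\Omega} \to 0$. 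Concretely, there exist constants $K,\kappa>0$ such that $\|\psi(\cdot,t)\|_{\infty,\Omega}\le K e^{-\kappa t}$ for all $t\ge 0$, and integrating yields
$$\int_0^\infty\!\!\int_\Omega \psi(x,t)\,dx\,dt \;\le\; |\Omega|\int_0^\infty K e^{-\kappa t}\,dt \;=\; \frac{K|\Omega|}{\kappa} \;<\; \infty.$$

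The only delicate point is extracting an exponential rate from Theorem \ref{theorem5}, whose statement emphasizes the dichotomy between persistence and extinction rather than a rate. I would justify exponential decay by the standard comparison embedded in the proof of that theorem: for $\epsilon>0$ small, the principal eigenvalue of (\ref{4-3a}-\ref{4-3b}) with $a$ replaced by $a+\epsilon$ remains negative, and for $t$ large the time-dependent coefficient is dominated by $a(x)+\epsilon$, so the comparison principle bounds $\psi$ above by the solution of the autonomous linear problem $w_t=\nabla\cdot d_1\nabla w+(a+\epsilon)w$, which decays exponentially at a rate arbitrarily close to $-\Lambda_0>0$. I expect no substantive obstacle beyond making this perturbation explicit.
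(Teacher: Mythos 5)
Your proposal is correct and follows essentially the same route as the paper, which simply cites Theorem \ref{theorem5} and the exponential decay of $\psi(\cdot,t)$ in the case $\Lambda_0<0$; you have merely filled in the routine details (positivity of $\psi_0$ for the lower bound, integration of the exponential envelope for finiteness). Your concern about extracting a rate is already resolved by the statement of Theorem \ref{theorem5}, which asserts exponential convergence outright in the negative-eigenvalue case, so the perturbation argument you sketch is a harmless but unneeded extra.
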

\begin{proof}
The result follows immediately from Theorem \ref{theorem5} and the exponential convergence of $\psi(\cdot,t)$ to $0$ as $t\to\infty$.
\end{proof}
\section{Spatial Spread of a Vector Reservoir Supported Virus}
In this section we consider the impact of a localized infected reservoir on the spread of the virus among a disturbed host population across a larger population.  The host population inhabits and is confined to a bounded region $\Omega$ in $R^2$ with smooth boundary $\partial\Omega$ such that $\Omega$ lies locally on one side of $\partial\Omega$. As in the preceding case, the variables $s$, $e$, $i$ and $c$ represent time dependent spatial densities of the susceptible, exposed, infective and contaminated classes. The time dependence populations are obtained by integration over $\Omega$.  The host dispersion across the region $\Omega$ is modeled by diffusion.  The diffusivities of susceptible and exposed hosts are given by  $d_2(x)$ and $d_3(x)$ respectively. The reservoir population remains confined to a proper sub region $\Omega_*$  of $\Omega$ with smooth boundary $\partial\Omega_*$ such that $\Omega_*$ lies locally on one side of $\partial\Omega_*$. Homogeneous Neumann Boundary Conditions are imposed on the hosts and the vector reservoir on the boundaries of  $\Omega$ and $\Omega_*$.  Issues of global well posedness and long term behavior of the reaction diffusion system modeling the dynamics of the disease in the reservoir population appeared in the previous section.  A spatially dependent force of infection accounts for transmission of the disease to the host from the infected reservoir.  This results in a force of infection of the form,
$$f_2(x,s,i,c,\psi)=\sigma_2(x)si+\kappa(x)s\tilde\psi+\omega(x)sc$$
Here $\sigma_2,\omega\in C^1(\overline\Omega)$ with $\sigma_2,\omega>0$, $\tilde\psi=\psi$ on $\Omega_*$ and $\tilde\psi=0$ on $\Omega- \overline\Omega_*$, and $\kappa\in L_\infty(\overline\Omega_*)$ is a $C^1$ function on $\overline\Omega_*$ with $\kappa>0$ on $\overline\Omega_*$ and $\kappa=0$ on $\Omega-\overline\Omega_*$. We have the following system that couples a reaction diffusion system on $\Omega_*$ with a partially dissipative system on $\Omega$.
\begin{subequations}
\begin{align}
\frac{\partial\theta}{\partial t}&=\bigtriangledown\cdot d_1(x)\bigtriangledown\theta+\beta(x)\theta-m(x)\theta^2&x\in\Omega_*,t>0\label{5-1a}\\
\frac{\partial\phi}{\partial t}&=\bigtriangledown\cdot d_1(x)\bigtriangledown\phi+\beta(x)\theta-\sigma_1(x)\phi\psi-m(x)\theta\phi&x\in\Omega_*,t>0\label{5-1b}\\
\frac{\partial\psi}{\partial t}&=\bigtriangledown\cdot d_1(x)\bigtriangledown\psi+\sigma_1(x)\phi\psi-m(x)\theta\psi&x\in\Omega_*,t>0\label{5-1c}\\
\frac{\partial\theta}{\partial \eta}&=\frac{\partial\phi}{\partial \eta}=\frac{\partial\psi}{\partial \eta}=0&x\in\partial\Omega_*,t>0\label{5-1d}\\
\theta(x,0)&=\theta_0(x);\psi(x,0)=\phi_0(x);\psi(x,0)=\psi_0(x)&x\in\Omega_*\label{5-1e}\\
\frac{\partial s}{\partial t}&=\bigtriangledown\cdot d_2(x)\bigtriangledown s-(\sigma(x)si+\omega(x)sc+\kappa(x)s\tilde\psi)&x\in\Omega,t>0\label{5-1f}\\
\frac{\partial e}{\partial t}&=\bigtriangledown\cdot d_3(x)\bigtriangledown e+(\sigma(x)si+\omega(x)sc+\kappa(x)s\tilde\psi)-\lambda e&x\in\Omega,t>0\label{5-1g}\\
\frac{\partial i}{\partial t}&=\lambda_1 e-\gamma i&x\in\Omega,t>0\label{5-1h}\\
\frac{\partial c}{\partial t}&=\gamma_1 i-\mu c i&x\in\Omega,t>0\label{5-1j}\\
\frac{\partial s}{\partial \eta}&=\frac{\partial e}{\partial \eta}=0&x\in\partial\Omega,t>0\label{5-1k}\\
s(x,0)&=s_0(x);e(x,0)=e_0(x);i(x,0)=i_0(x);c(x,0)=c_0(x)&x\in\Omega\label{5-1l}
\end{align}
\end{subequations} 
We introduce the following requirements for the system that couples the spatial dynamics of the reservoir on subdomain $\Omega_*$ with the dynamics of the host on the larger domain $\Omega$.

\bigskip
\noindent\textbf{Condition III:}
\begin{itemize}
\item $\Omega$ is a bounded region in $R^2$ with smooth boundary $\partial\Omega$ such that $\Omega$ lies locally on one side of $\partial\Omega$, and $\Omega_*$ is a proper subregion of $\Omega$ with smooth boundary $\partial\Omega_*$ such that $\Omega_*$ lies locally on one side of $\partial\Omega_*$. 
\item $d_1\in C^2(\overline\Omega_*)$ and $d_1>0$.
\item $\sigma_1,\beta,m\in C^1(\overline\Omega_*)$ and $\sigma_1,\beta,m>0$. 
\item $\kappa\in\L_\infty(\Omega)$ and $\kappa\in C^1(\overline\Omega_*)$ such that $\kappa>0$ on $\overline\Omega_*$ and $\kappa=0$ on $\Omega-\overline\Omega_*$.
\item $d_2,d_3 \in C^2(\overline\Omega)$ with $d_2,d_3>0$.
\item $\lambda,\lambda_1,\lambda_2,\gamma,\gamma_1,\gamma_2,\mu>0$ with $\lambda=\lambda_1+\lambda_2$ and $\gamma=\gamma_1+\gamma_2$.
\item $\omega,\sigma\in C^1(\overline\Omega)$ with $\omega,\sigma>0$.
\item $\theta_0,\phi_0,\psi_0\in C^2(\overline\Omega_*,(0,\infty))$ with $\theta_0=\phi_0+\psi_0$.
\item $s_0,e_0,i_0,c_0\in C^2(\overline\Omega,(0,\infty))$.
\end{itemize}
We remark that if $\psi$ is not identically $0$, the function $\kappa(x)$ can create a spatial discontinuity of $f_2(x,s,i,c,\psi)$ across the boundary of $\partial\Omega_*$, so we are not able to obtain the classical smooth solutions that we have had with a smooth force of infection term. However, it is still possible to obtain so-called classical strong solutions.

\begin{defn}\label{definition5-2}
We say the functions $s,e,i,c$ on $\overline\Omega\times[0,\infty)$ and $\theta,\phi,\psi$ on $\overline\Omega_*\times[0,\infty)$ constitue a classical strong solution to (\ref{5-1a}-\ref{5-1l}) if and only if
\begin{itemize}
\item $s,e,i,c\in C(\overline\Omega\times[0,\infty))$ and $\theta,\phi,\psi\in C(\overline\Omega_*\times[0,\infty))$.
\item $\theta,\phi,\psi\in C^{2,1}(\Omega_*\times(\epsilon,T))\cap W_p^{2,1}(\Omega_*\times(\epsilon,T))$ for each $p>1$ and $0<\epsilon<T$.
\item $s,e\in W_p^{2,1}(\Omega\times(\epsilon,T))$ for each $p>1$ and $0<\epsilon<T$.
\item $i,c\in C^1([0,\infty),C(\overline\Omega))$.
\item The partial differential equations, boundary conditions, and the initial conditions (\ref{5-1a}-\ref{5-1l}) are satisfied.
\end{itemize}
\end{defn}
Our well-posedness result for the spatially heterogeneous host reservoir system follows.
\begin{theorem}\label{theorem8}
If the requirements of Condition III are satisfied, then there exists a unique classical strong componentwise nonnegative solution to (\ref{5-1a}-\ref{5-1l}). Furthermore, the solution is componentwise uniformly bounded in the sup norm.
\end{theorem}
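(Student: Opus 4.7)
The plan is to exploit the one-way coupling between the reservoir subsystem (\ref{5-1a}-\ref{5-1e}) on $\Omega_*$ and the host subsystem (\ref{5-1f}-\ref{5-1l}) on $\Omega$: the reservoir dynamics are decoupled from the host, so I would first solve for $(\theta,\phi,\psi)$ and then treat $\tilde\psi$ as a known, uniformly bounded (but spatially discontinuous) input to the host equations. For the reservoir piece, the proof of Theorem \ref{theorem6} carries over verbatim with $\Omega$ replaced by $\Omega_*$, yielding a unique nonnegative classical solution $\theta,\phi,\psi\in C^{(2,1)}(\overline{\Omega_*}\times(0,\infty))$ with $\max\{\|\theta\|_\infty,\|\phi\|_\infty,\|\psi\|_\infty\}\le\max\{\|\theta_0\|_{\infty,\Omega_*},\|\beta\|_{\infty,\Omega_*}/m_{\min}\}$. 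In particular, $H(x,t):=\kappa(x)\tilde\psi(x,t)$ belongs to $L_\infty(\Omega\times(0,\infty))$ even though it is discontinuous across $\partial\Omega_*$.

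For local existence of a classical strong solution in the sense of Definition \ref{definition5-2}, I would set up a mild formulation for $(s,e)$ using the Neumann heat semigroups generated by $\bigtriangledown\cdot d_2(x)\bigtriangledown$ and $\bigtriangledown\cdot d_3(x)\bigtriangledown$, combined with the elementary variation-of-parameters formulas for the ODEs (\ref{5-1h}) and (\ref{5-1j}), as in the Hodgkin–Huxley adaptation cited in Theorem \ref{theorem3}. Because the forcing piece $sH$ is only in $L_\infty$ rather than Hölder continuous in space, Lemma \ref{lemma3-2} in its $W_p^{(2,1)}$ form (estimate (\ref{3-3})) rather than its Schauder form (\ref{3-5}) is the appropriate tool, and this is exactly what Definition \ref{definition5-2} was tailored to accommodate. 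A fixed point / Picard iteration in $C([0,T_0],L_\infty(\Omega)^4)$ for $T_0$ small then produces a local strong solution on a maximal interval $[0,T_{\max})$, and the invariant-region argument of \cite{bib14} (adapted to strong solutions) preserves nonnegativity, since each loss term on $s,e,i,c$ carries the corresponding component as a factor and hence the reaction vector field does not point out of the nonnegative orthant.

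For global existence and the uniform sup-norm bound, the strategy is exactly the bootstrap used in Theorem \ref{theorem3}. The $s$-equation reads
\begin{equation*}
\frac{\partial s}{\partial t}=\bigtriangledown\cdot d_2(x)\bigtriangledown s-s(\sigma i+\omega c+H)
\end{equation*}
with a nonpositive coefficient multiplying $s$, so the parabolic maximum principle gives $0\le s(x,t)\le\|s_0\|_{\infty,\Omega}$ on $[0,T_{\max})$. Summing (\ref{5-1f})+(\ref{5-1g})+(\ref{5-1h})+(\ref{5-1j}) and integrating yields $\frac{d}{dt}\int_\Omega(s+e+i+c)\,dx+\lambda_2\int_\Omega e+\gamma_2\int_\Omega i+\mu\int_\Omega c=0$, which produces uniform $L_1$ bounds on all components. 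Because the reaction terms summed over the $(s,e)$ block are nonpositive and $s$ is bounded, the $L_p$ bootstrap of \cite{bib8} gives $\|e\|_{p,\Omega\times(T,T+1)}\le C_p$ uniformly in $T$ for every $p>1$; the same ODE estimates used to obtain (\ref{3-10})–(\ref{3-13}) then transfer the $L_p$ control from $e$ to $i$ and from $i$ to $c$. Finally, viewing the right-hand side of (\ref{5-1g}) as a forcing $f=\sigma si+\omega sc+sH-\lambda e$ that is now in $L_p(\Omega\times(\tau,\tau+2))$ for every $p$, estimate (\ref{3-4}) of Lemma \ref{lemma3-2} yields a uniform $L_\infty$ bound for $e$, whence integrating the ODEs for $i$ and $c$ in time yields uniform $L_\infty$ bounds for them as well. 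This precludes finite-time blow-up, so $T_{\max}=\infty$, and uniqueness is standard because all nonlinearities are locally Lipschitz in the bounded regime.

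The main obstacle is precisely the spatial discontinuity of $\kappa$ across $\partial\Omega_*$: it forces the problem out of the classical $C^{(2,1)}$ Schauder setting of Theorem \ref{theorem3} and into the $W_p^{(2,1)}$-based strong-solution framework of Definition \ref{definition5-2}. All the estimates above go through because Lemma \ref{lemma3-2} is an $L_p$ maximal-regularity statement that is insensitive to such discontinuities, and because the ODE components $i$ and $c$ require no spatial smoothing to pick up the bounds they need.
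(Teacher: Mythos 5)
Your proposal is correct and follows essentially the same route as the paper: decouple the reservoir subsystem and solve it by the analog of Theorem \ref{theorem6} on $\Omega_*$, treat $\kappa\tilde\psi$ as a bounded (discontinuous) forcing in the host system within the $W_p^{(2,1)}$ strong-solution framework of Definition \ref{definition5-2}, bound $s$ by the maximum principle, and then run the $L_1\to L_p\to L_\infty$ bootstrap of Theorem \ref{theorem3} and Lemma \ref{lemma3-2} to get uniform sup-norm bounds on $e$, $i$ and $c$. Your write-up is in fact somewhat more explicit than the paper's about why the discontinuity of $\kappa$ across $\partial\Omega_*$ forces the $L_p$ maximal-regularity setting rather than the Schauder setting, but the substance is the same.
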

\begin{proof}
We observe that there is no cross feedback from infected host to the uninfected reservoir.  So, the existence of a classical solution $\theta,\phi,\psi\in C^{2,1}(\Omega_*\times(0,T))\cap W_p^{2,1}(\Omega_*\times(\epsilon,T))$ for each $p>1$ and $0<\epsilon<T$ satisfying (\ref{5-1a}-\ref{5-1e}) is guaranteed by the analog of Theorem \ref{theorem6} on $\Omega_*$, and is independent of the evolution of the infection in the host. Furthermore, this solution is componentwise nonnegative and uniformly bounded in the sup norm. We can thereby assume $\theta,\phi,\psi$ as given. This allows us to focus on (\ref{5-1f}-\ref{5-1l}).  Standard arguments insure the local well posedness of nonnegative solutions on an interval $[0,T_{max})$, and $T_{max}=\infty$ if solutions are bounded in the sup norm on every finite time interval. The maximum principle immediately insures the existence of a $M_s>0$ such that $\|s(\cdot,t)\|_{\infty,\Omega}\le M_s$ for all $T>0$. Then the form of the right hand sides of the differential equations for $e$, $i$ and $c$ clearly result in $e$, $i$ and $c$ being bounded in the sup norm on every finite time interval, and we can conclude that $T_{max}=\infty$. We can now apply the arguments of developed in Theorem \ref{theorem3} to produce uniform $L_p(\Omega\times(k,k+1))$ bounds for each $p>1$ independent of the positive integer $k$ of the form
$$\|s\|_{p,\Omega\times(k,k+1)},\|e\|_{p,\Omega\times(k,k+1)},\|i\|_{p,\Omega\times(k,k+1)},\|c\|_{p,\Omega\times(k,k+1)}\le C_p$$
These in turn can be used to give uniform $W_{p,\Omega}^{2,1}(\Omega\times(k,k+1))$ bounds for each $p>1$ of the form 
$$\|s\|_{p,\Omega\times(k,k+1)}^{2,1},\|e\|_{p,\Omega\times(k,k+1)}^{2,1}\le K_p$$
Then, $p$ can be chosen sufficiently large to insure that $\|e\|_{\infty,\Omega\times(0,\infty)}$ is bounded. The uniform boundeds on $e$ allow us to obtain uniform bounds on $i$ and $c$ directly from their equations, and the result follows.
\end{proof}

The asymptotic impact of our vector reservoir on the host population hinges on the following eigenvalue problem.
\begin{subequations}
\begin{align}
\bigtriangledown\cdot d(x)\bigtriangledown \xi(x)+a(x)\xi(x)&=\Lambda\xi(x)&x\in\Omega_*\label{5-2a}\\
\frac{\partial\xi}{\partial\eta}&=0&x\in\partial\Omega_*\label{5-2b}
\end{align}
\end{subequations}
where $a(x)=\Omega_*(x)(\sigma_1(x)-m(x))$.
\begin{theorem}\label{theorem9}
If the conditions of Theorem \ref{theorem8} hold, then there exists a positive steady state $\theta_*$ of (\ref{5-1a}) and a constant $s_*\ge 0$ such that
$$\lim_{t\to\infty}\|s(\cdot,t)-s_*\|_{\infty,\Omega}=\lim_{t\to\infty}\|\theta(\cdot,t)-\theta_*(\cdot)\|_{\infty,\Omega_*}=0$$
In addition,
$$\lim_{t\to\infty}\|e(\cdot,t)\|_{\infty,\Omega}=\lim_{t\to\infty}\|i(\cdot,t)\|_{\infty,\Omega}=\lim_{t\to\infty}\|c(\cdot,t)\|_{\infty,\Omega}=0$$
Finally, if the principal eigenvalue of (\ref{5-2a}-\ref{5-2b}) is positive then there exists an endemic steady state $\phi_*,\psi_*>0$ on $\Omega_*$ so that
 $$\lim_{t\to\infty}\|\phi(\cdot,t)-\phi_*(\cdot)\|_{\infty,\Omega_*}=\lim_{t\to\infty}\|\psi(\cdot,t)-\psi_*(\cdot)\|_{\infty,\Omega_*}=0$$
and $s_*=0$.  If the principal eigenvalue of (5.2a-b) is negative, then $s_*>0$ and 
$$\lim_{t\to\infty}\|\phi(\cdot,t)-\theta_*(\cdot)\|_{\infty,\Omega_*}=\lim_{t\to\infty}\|\psi(\cdot,t)\|_{\infty,\Omega_*}=0$$
\end{theorem}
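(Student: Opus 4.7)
The plan is to reduce the problem in two stages. First, I would invoke the fact that the reservoir subsystem \eqref{5-1a}-\eqref{5-1e} on $\Omega_*$ is completely decoupled from the host equations. Therefore Theorem \ref{theorem6} applied on $\Omega_*$ gives uniform convergence $\|\theta(\cdot,t)-\theta_*(\cdot)\|_{\infty,\Omega_*}\to 0$, and Theorem \ref{theorem7} provides the dichotomy: if $\Lambda_0>0$ then $(\phi,\psi)\to(\phi_*,\psi_*)$ uniformly on $\Omega_*$ with $\psi_*>0$, while if $\Lambda_0<0$ then $(\phi,\psi)\to(\theta_*,0)$ uniformly on $\Omega_*$. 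This part is essentially a citation.

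Second, I would establish $e,i,c\to 0$ uniformly on $\Omega$ by reproducing the bootstrapping scheme from Theorem \ref{theorem3}. The key observation is that summing \eqref{5-1f}-\eqref{5-1j} over $\Omega$ makes the inter-species transmission term $\kappa s\tilde\psi$ cancel exactly (it leaves $s$ and enters $e$), so
\[
\frac{d}{dt}\int_\Omega (s+e+i+c)\,dx+\min\{\mu,\lambda_2,\gamma_2\}\int_\Omega(e+i+c)\,dx\le 0,
\]
yielding the same uniform $L_1$ bound and the same $\int_T^{T+1}\!\!\int_\Omega(e+i+c)\to 0$ property used in \eqref{3-6}. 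The iterated interpolation/duality chain \eqref{3-7}-\eqref{3-19} then applies verbatim (with $s$ still bounded by the maximum principle since $\kappa s\tilde\psi$ is a \emph{sink} term in the $s$-equation), producing $\|e(\cdot,t)\|_{\infty,\Omega}, \|i(\cdot,t)\|_{\infty,\Omega}, \|c(\cdot,t)\|_{\infty,\Omega}\to 0$.

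Third, I would split into the two eigenvalue cases to analyze $s$. When $\Lambda_0<0$, Corollary \ref{corollary4-3} gives $\int_0^\infty\!\!\int_{\Omega_*}\psi\,dxdt<\infty$, so the forcing $\kappa s\tilde\psi$ is integrable in spacetime. I can then re-run the $w=s-\ln s$ Lyapunov argument from the end of Theorem \ref{theorem3}, with the new term $f(x,t)=-(\sigma si+\omega sc+\kappa s\tilde\psi)+(\sigma i+\omega c+\kappa\tilde\psi)$; boundedness of $s$ together with the spacetime integrability of $i,c,\tilde\psi$ makes $\int_{t_0}^\infty\!\!\int_\Omega|f|\,dxdt$ finite, and the comparison principle yields $s_*-\ln s_*<\infty$, forcing $s_*>0$. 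Combined with $\tfrac{d}{dt}\int_\Omega s\le 0$ and \eqref{3-20}-style $H^1$-decay of $s-\bar s(t)$, this gives $\|s(\cdot,t)-s_*\|_{\infty,\Omega}\to 0$ by the same bootstrap.

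The main obstacle is the case $\Lambda_0>0$, where I must show $s_*=0$ even though the kill term $\kappa s\tilde\psi$ lives only on the strict subdomain $\Omega_*$. My approach is a spectral comparison: by Theorem \ref{theorem7} and the strong maximum principle, $\psi_*>0$ on $\overline{\Omega_*}$, so there exist $c_0>0$ and $t_0>0$ with $\kappa(x)\tilde\psi(x,t)\ge c_0\chi_{\Omega_*}(x)$ for all $t\ge t_0$. Hence
\[
s_t\le \nabla\cdot d_2(x)\nabla s-c_0\chi_{\Omega_*}(x)\,s\qquad x\in\Omega,\ t\ge t_0,
\]
with homogeneous Neumann data. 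The operator $-\nabla\cdot d_2\nabla+c_0\chi_{\Omega_*}$ under Neumann boundary conditions has strictly positive principal eigenvalue $\mu_1>0$ (its Rayleigh quotient on the constant $1$ equals $c_0|\Omega_*|/|\Omega|>0$, and the corresponding eigenfunction is strictly positive on $\overline\Omega$). Comparing $s$ with a multiple of this eigenfunction times $e^{-\mu_1 t}$ yields $\|s(\cdot,t)\|_{\infty,\Omega}\to 0$ exponentially, so $s_*=0$. This comparison step is the only genuinely new ingredient beyond the techniques already developed in Theorems \ref{theorem3} and \ref{theorem7}.
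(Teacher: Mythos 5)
Your proposal is correct and follows essentially the same route as the paper: decouple the reservoir system and invoke the analogs of Theorems \ref{theorem6} and \ref{theorem7}, run the Theorem \ref{theorem3} bootstrapping for $e$, $i$, $c$, kill $s$ in the positive-eigenvalue case via the uniform lower bound $\kappa\tilde\psi\ge\tilde k$ and exponential decay for the operator $\nabla\cdot d_2\nabla-\tilde k$ (the paper states this as decay of the analytic semigroup generated by $A_{\tilde k}$, you as positivity of the principal eigenvalue --- the same fact), and use the $w=s-\ln s$ argument with space--time integrability of $i$, $c$, $\tilde\psi$ to get $s_*>0$ in the negative-eigenvalue case. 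The only cosmetic difference is that the paper obtains $e,i,c\to0$ in the positive-eigenvalue case directly from the exponential decay of $s$ rather than from the bootstrapping chain, which changes nothing of substance.
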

\begin{proof}
If the principal eigenvalue of (\ref{5-2a}-\ref{5-2b}) is positive, then an analog of Theorem \ref{theorem7} guarantees the uniform convergence of $\left(\theta(\cdot,t),\phi(\cdot,t),\psi(\cdot,t)\right)$ to a steady state solution $\left(\theta_*,\phi_*,\psi_*\right)$ of (\ref{5-1a}-\ref{5-1d}) such that $\theta_*,\phi_*,\psi_*>0$. Moreover, it insures that there exists a $\tilde k\in L_\infty(\overline\Omega_*)$ with $\tilde k$ positive and continuous on $\overline\Omega_*$ and $\tilde k=0$ on $\Omega-\overline\Omega_*$, and $T_*>0$ so that $\kappa(x)\psi(x,t)\ge\tilde k(x)$ for $t>T_*$. In this case, we have
$$\frac{\partial s}{\partial t}\le \bigtriangledown\cdot d_2(x)\bigtriangledown s-\tilde k(x)s,\text{  for  }x\in\Omega,t>T_*$$ 
We introduce the operator on $C(\overline\Omega)$ defined by 
$$(A_{\tilde k}u)(x)=(\bigtriangledown\cdot d_2 \bigtriangledown u)(x)-\tilde k(x)u(x)$$
$$D(A_{\tilde k})=\left\{u\in C(\overline\Omega) | u\in C^2(\Omega),\frac{\partial u}{\partial \eta}=0\text{ on }\partial\Omega\right\}$$
Then \cite{bib27,bib18,bib19} guarantee $A_{\tilde k}$ generates a positive analytic semigroup $\left\{T_{\tilde k}(t)|t\ge 0\right\}$ on $C(\overline\Omega)$ with $\|T_{\tilde k}(t)\|_{\infty,\Omega}\le \exp(-\epsilon t)$ for some $\epsilon>0$. Consequently, we have
$$\|s(\cdot,t)\|_{\infty,\Omega}\le\left(\|s_0\|_{\infty,\Omega}\right)\exp(-\epsilon t)$$
Hence, $\lim_{t\to\infty}\|s(\cdot,t)\|_{\infty,\Omega}=0$.

If we use the uniform bounds on $i$, $c$ and $\psi$, along with the exponential decay of $s(\cdot,t)$, then the positivity of $\lambda$ implies there are constants $K,\delta>0$ so that
$$\|e(\cdot,t)\|_{\infty,\Omega}\le K\exp(-\delta t)$$
Then thi sdecay estimate and equations (\ref{5-1h}-\ref{5-1j}) imply similar decay estimates for $i$ and $c$. That is,
$$\lim_{t\to\infty}\|e(\cdot,t)\|_{\infty,\Omega}=\lim_{t\to\infty}\|i(\cdot,t)\|_{\infty,\Omega}=\lim_{t\to\infty}\|c(\cdot,t)\|_{\infty,\Omega}=0$$
with the decay being exponential. 

If the principal eigenvalue of (\ref{5-2a}-\ref{5-2b}) is negative, then an analog of Theorem \ref{theorem7} guarantees uniform convergence of $\theta(\cdot,t)$ to a positive steady state solution $\theta_*$ of (\ref{5-1a}-\ref{5-1d}). In addition, $\lim_{t\to\infty}\|\phi(\cdot,t)-\theta_*\|_{\infty,\Omega_*}=0$ and $\lim_{t\to\infty}\|\psi(\cdot,t)\|_{\infty,\Omega_*}=0$, with the latter decay being exponential. As above, 
$$\lim_{t\to\infty}\|e(\cdot,t)\|_{\infty,\Omega}=\lim_{t\to\infty}\|i(\cdot,t)\|_{\infty,\Omega}=\lim_{t\to\infty}\|c(\cdot,t)\|_{\infty,\Omega}=0$$
We observe that $\overline s(t)$, the spatial average of $s(\cdot,t)$, is decreasing, and consequently, there exists $s_*\ge 0$ such that
$$\lim_{t\to\infty}\overline s(t)=\lim_{t\to\infty}\frac{1}{|\Omega|}\int_\Omega s(x,t)dx=s_*\ge 0$$
It remains to show that $s_*>0$. We define
$$f(x,t,s)=-\left(\sigma(x)i(x,t)+\omega(x)c(x,t)+\kappa(x)\psi(x,t)\right)s$$
By applying the sup norm bounds on $\sigma$, $\omega$ and $\kappa$, and the exponential decay of $\i(\cdot,t)$, $c(\cdot,t)$ and $\psi(\cdot,t)$, we can apply reasoning similar to the proof of Theorem \ref{theorem3} (applied only to the equation for $s$) to conclude that $s_*>0$. 
\end{proof}
\section{Conclusion and Further Discussion}
We have seen that in both the spatial independent case and the spatially dependent case, the infection will be extinguished over the long term in the host population if there is no interaction with the infected reservoir population. Indeed a high rate of mortality enhances the rapidity of the disappearance to the infection.  On the other hand contact with the infected reservoir can have dire consequences for the long term persistence of the susceptible host population.  We have observed in the spatially dependent case that the function $R(x)=\theta_*(x)(\sigma_1 (x)-m(x))$ and an eigenvalue problem play a critical role.  If the principal eigenvalue is negative, then the virus decays to zero in the reservoir. This is the case if $\sigma_1(x)-m(x)<0$ in $\Omega_*$. If it is positive it converges to a strictly positive endemic steady state. If the infection converges to a positive endemic steady state, we have seen that the susceptible host population converges to zero, and if the infection in the reservoir converges to zero the host persists and converges to a positive steady state. This is the case if $\sigma_1(x)-m(x)>0$ on $\Omega_*$. Here we are totally consistent with the spatially independent case where we set $R=\frac{\sigma_1}{m}$, and the number $R$ plays the role of the reproductive number.

There are issues with the applicability of this model to outbreak of the Ebola Virus in human populations.  Fruit bats are generally believed to be the natural hosts or reservoir of the Ebola virus with a wide range of mammals as well as humans serving as accidental hosts. The Ebola Virus has been implicated in the catastrophic decline in primate populations in western equatorial Africa.  We feel that our model may be most applicable in describing the transmission of Ebola from fruit bats to primates and other indigenous mammals.  It can be argued that a realistic model might include indirect environmental transmission as developed in \cite{bib13, bib20}.

In the case of the Ebola outbreaks in humans, the model should include a linkage to infected bush meat, \cite{bib22}.  In sub-Saharan Africa there is a wide spread tradition of harvesting, processing, and consuming bush meat \cite{bib1}.  The term bush meat covers a plethora of wild animals which includes non-human primates, rats and other rodents, antelopes, bush pigs, and pangolin as well as bats.  Subsequent work will address more complex models that incorporate a variety of intermediate hosts involved in the transfer of the Ebola Virus from fruit bats to humans as well as including indirect environmental transmission.

\pagebreak

\end{document}